\newtheorem{theorem}{Theorem}[section]
\newtheorem{corollary}[theorem]{Corollary}
\newtheorem{lemma}[theorem]{Lemma}
\newtheorem{question}[theorem]{Question}
\theoremstyle{definition}
\newtheorem{definition}[theorem]{Definition}
\newtheorem{example}[theorem]{Example}
\theoremstyle{remark}
\newtheorem{remark}[theorem]{Remark.}
\theoremstyle{conjecture}
\newtheorem{conjecture}[theorem]{Conjecture.}
\title{Up-down colorings of virtual-link diagrams and the necessity of Reidemeister moves of type I\hspace{-1.5pt}I}
\author{Kanako Oshiro}
\address{Deparment of Information and Communication Sciences, Sophia University, 7-1 Kioicho, Chiyoda-ku, Tokyo 102-8554, Japan}
\email{oshirok@sophia.ac.jp}
\thanks{The first author was supported by JSPS KAKENHI Grant Number 16K17600.}
\author{Ayaka Shimizu}
\address{Department of Mathematics, National Institute of Technology, Gunma College, 580 Toriba-cho, Maebashi-shi, Gunma 371-8530, Japan}
\email{shimizu@nat.gunma-ct.ac.jp}
\thanks{The second author was supported by Grant for Basic Science Research Projects from The Sumitomo Foundation (160154).}
\author{Yoshiro Yaguchi}
\address{Department of Mathematics, National Institute of Technology, Gunma College, 580 Toriba-cho, Maebashi-shi, Gunma 371-8530, Japan}
\email{yaguchi-y@nat.gunma-ct.ac.jp}
\thanks{}
\date{}
\begin{document}

\maketitle

\begin{abstract}
We introduce an up-down coloring of a virtual-link diagram.
The colorabilities give a lower bound of the minimum number of Reidemeister moves of type I\hspace{-1.5pt}I which are needed between two $2$-component virtual-link diagrams.
By using the notion of a quandle cocycle invariant, we determine the necessity of Reidemeister moves of type I\hspace{-1.5pt}I for a pair of diagrams of the trivial virtual-knot. This implies that for any virtual-knot diagram $D$, there exists a diagram $D'$ representing the same virtual-knot such that any sequence of generalized Reidemeister moves between them includes at least one Reidemeister move of type  I\hspace{-1.5pt}I.
\end{abstract}

\section{Introduction}

Necessity of Reidemeister moves, or estimations of the minimum number of those moves, between two diagrams of the same classical-knot or classical-link have been studied by several ways; for example, for studies using the geometrical feature of diagrams or their Gauss diagram, see \cite{hagge, hass, hayashi1, hayashi2, manturov, ostlund, trace}, for studies using the concepts of quandle colorings and quandle cocycle invariants, see \cite{CESS, CG}. 

In terms of Reidemeister moves of type I\hspace{-1pt}I, by using geometric properties, T.~J.~Hagge  \cite{hagge} and V.~O.~Manturov  \cite{manturov} costructed pairs of diagrams of the same classical-knot such that at least one Reidemeister moves of type I\hspace{-1pt}I is needed between them. 
Especially, Hagge proved that every classical-knot admits a pair $(D, D')$ of diagrams such that any sequence of Reidemeister moves between $D$ and $D'$ includes a Reidemeister move of type I\hspace{-1pt}I. 
On the other hand, as far as we know, there is no method using the concept of quandle colorings or quandle cocycle invariants to determine the necessity of Reidemeister moves of type I\hspace{-1pt}I.
\footnote{Z. Cheng and H. Gao \cite{CG} also studied about  the necessity of Reidemeister moves of type I\hspace{-1pt}I by using the concept of quandle colorings. However, their method contains a key error in the conditions of an algebraic structure.}

In this paper, we study about the necessity (or an estimation of the minimum number) of Reidemeister moves of type I\hspace{-1pt}I for virtual-link diagrams by using up-down colorings and the notion of quandle cocycle invariants.
Note that our method is also useful for classical-link diagrams, that is, the readers may read all the part  except for Theorem~\ref{thm:4} by replacing  ``virtual" with ``classical" and ``generalized Reidemeister moves" with ``Reidemeister moves". 
Besides, in this paper, a virtual-link (or virtual-knot) diagram means an oriented virtual-link (or virtual-knot) diagram.

For  a virtual-link (or virtual-knot) diagram $D$, we introduce the {\it number of $n$-up-down colorings} $\# {\rm Col}_n(D)$ which is analogous to quandle coloring numbers, the {\it maximum order of up-down colorings} maxord$(D)$, and  a multi-set $\Phi_f(D)$ which is analogous to quandle cocycle invariants. We show the following theorems:
\begin{theorem}\label{thm:1}
Let $D$ and $D'$ be diagrams which represent the same virtual-link. 
If $\# {\rm Col}_{n}(D)\not = \# {\rm Col}_{n}(D')$, then any sequence of generalized Reidemeister moves between $D$ and $D'$ includes at least one Reidemeister move of type I\hspace{-1pt}I.
\end{theorem}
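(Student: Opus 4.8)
The plan is to prove the statement by showing that the quantity $\#{\rm Col}_n(\cdot)$ is invariant under every generalized Reidemeister move \emph{except} a move of type I\hspace{-1pt}I, and then to compose bijections along a hypothetical sequence that avoids type I\hspace{-1pt}I moves. Since $D$ and $D'$ represent the same virtual-link, there is at least one sequence of generalized Reidemeister moves between them, so it suffices to rule out sequences with no type I\hspace{-1pt}I move.

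First I would isolate the purely local nature of an $n$-up-down coloring: a coloring is an assignment of colors to the semi-arcs of the diagram subject to conditions imposed independently at each classical crossing and at each virtual crossing. Fix a generalized Reidemeister move carried out inside a small disk $B$; outside $B$ the two diagrams coincide. It then suffices, for each move type other than type I\hspace{-1pt}I, to produce a bijection between the set of colorings of the input tangle inside $B$ and the set of colorings of the output tangle inside $B$ which is the identity on the colors assigned to the arcs meeting $\partial B$. Gluing such a bijection with the (unchanged) coloring of the exterior of $B$ yields a bijection between the sets of $n$-up-down colorings of the two whole diagrams, and hence the equality of the two numbers $\#{\rm Col}_n$.

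The core of the proof is therefore a disk-by-disk case analysis over the move types: Reidemeister moves of type I and of type I\hspace{-1pt}I\hspace{-1pt}I, the three purely virtual moves, and the mixed (semi-virtual) move. For the type I\hspace{-1pt}I\hspace{-1pt}I move and the virtual and mixed moves, the verification runs parallel to the familiar computation for quandle colorings, using that a virtual crossing merely transposes the two colors passing through it and imposes no genuine relation. The type I move is the delicate case, precisely because it is the move whose type I\hspace{-1pt}I analogue is \emph{not} supposed to preserve the count: here one must check that the orientation data carried by the ``up-down'' colors is consistent around the kink, so that the single semi-arc in the kinked picture receives a well-defined color and, conversely, every color of the straight arc extends across the kink in exactly one way. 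Writing out the coloring rule at a positive kink and at a negative kink and checking these two assertions is the step I expect to be the main obstacle, though it should reduce to a short direct computation.

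Finally, suppose for contradiction that there is a sequence $D=D_0\to D_1\to\cdots\to D_k=D'$ of generalized Reidemeister moves in which no $D_{i-1}\to D_i$ is a move of type I\hspace{-1pt}I. By the case analysis above, each of these moves induces a bijection between the set of $n$-up-down colorings of $D_{i-1}$ and that of $D_i$; composing these bijections gives $\#{\rm Col}_n(D)=\#{\rm Col}_n(D')$, contradicting the hypothesis $\#{\rm Col}_n(D)\neq\#{\rm Col}_n(D')$. Therefore every sequence of generalized Reidemeister moves between $D$ and $D'$ includes at least one Reidemeister move of type I\hspace{-1pt}I, as claimed.
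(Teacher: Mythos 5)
Your proposal is correct and follows essentially the same route as the paper: a local, disk-by-disk verification that each generalized Reidemeister move other than a type I\hspace{-1pt}I move induces a bijection on $n$-up-down colorings fixing the exterior colors, followed by composing these bijections along a hypothetical sequence with no type I\hspace{-1pt}I move. The kink computation you flag as the delicate step does go through exactly as you expect (the loop semi-arc forces the incoming and outgoing colors to agree, so the coloring extends uniquely), which is what the paper's figure for the RI-moves records.
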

\begin{theorem}\label{thm:2}
Let $D$ and $D'$ be $2$-component virtual-link diagrams which represent the same virtual-link.
Any finite sequence of generalized Reidemeister moves between $D$ and $D'$ includes at least $|{\rm maxord}(D)-{\rm maxord}(D')|/2$ Reidemeister moves of type I\hspace{-1pt}I.
\end{theorem}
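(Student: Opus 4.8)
The plan is to prove the estimate one move at a time. Write a finite sequence of generalized Reidemeister moves from $D$ to $D'$ as $D=D_0\to D_1\to\cdots\to D_N=D'$, and let $k$ be the number of steps among these that are Reidemeister moves of type I\hspace{-1pt}I. It suffices to establish two local facts: (A) if the step $D_{i-1}\to D_i$ is \emph{not} a Reidemeister move of type I\hspace{-1pt}I (so it is a Reidemeister move of type I or type I\hspace{-1pt}I\hspace{-1pt}I, a virtual Reidemeister move, or the mixed move), then ${\rm maxord}(D_{i-1})={\rm maxord}(D_i)$; and (B) if the step $D_{i-1}\to D_i$ is a Reidemeister move of type I\hspace{-1pt}I, then $|{\rm maxord}(D_{i-1})-{\rm maxord}(D_i)|\le 2$. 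Granting (A) and (B), the triangle inequality gives
\[
|{\rm maxord}(D)-{\rm maxord}(D')|\le\sum_{i=1}^{N}|{\rm maxord}(D_{i-1})-{\rm maxord}(D_i)|\le 2k,
\]
which is exactly the assertion.

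Fact (A) should follow from the same bookkeeping that underlies Theorem~\ref{thm:1}. For each of the listed moves there is an explicit correspondence between the up-down colorings of $D_{i-1}$ and those of $D_i$: one keeps the colors outside the move disk, and inside the disk the colors are forced by the coloring condition at the classical crossing(s) involved. Checking the (small number of) cases, this correspondence is a bijection that preserves the multiset of colors appearing on a diagram, hence preserves the order of each coloring, hence preserves the maximum order.

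Fact (B) is the technical core. Suppose $D_i$ is obtained from $D_{i-1}$ by creating a bigon, i.e.\ two new classical crossings together with the new arcs of the diagram between them. Given an up-down coloring $C$ of $D_{i-1}$, the colors on the two strands entering the move disk are already fixed, and the coloring condition at the two new crossings then forces the colors of the new arcs inside the disk. Because a type-I\hspace{-1pt}I move returns each strand to its original configuration, these forced colors are consistent with the colors along the boundary of the disk --- in particular the two ends of the ``straightened'' strand must carry the same color both before and after the move --- so $C$ extends to an up-down coloring $\widehat C$ of $D_i$, and conversely every up-down coloring of $D_i$ restricts to one of $D_{i-1}$. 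The point is that the colors on the new arcs are, by the coloring rule, only one step away from colors already present near the disk; so passing from $C$ to $\widehat C$ raises the order by at most $2$ and never lowers it, giving ${\rm maxord}(D_i)\ge{\rm maxord}(D_{i-1})$, while deleting those same arcs when restricting lowers the order by at most $2$, giving ${\rm maxord}(D_{i-1})\ge{\rm maxord}(D_i)-2$. Together these prove (B).

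I expect the main obstacle to be the verification inside (B). One must go through the several configurations of a type-I\hspace{-1pt}I move --- the two strands co-oriented or anti-oriented, either strand pushed across, each choice of signs for the two new crossings --- and for each configuration (i) write down the forced colors on the new arcs, (ii) confirm that the extension to $D_i$ is well defined and that restriction back to $D_{i-1}$ is consistent, and (iii) check, against the precise definition of the order of an up-down coloring, that the order really changes by at most $2$. Step (iii) is the delicate one: it is where the combinatorial definition of an up-down coloring and of its order must be used carefully, and where the hypothesis that $D$ and $D'$ are $2$-component diagrams enters, since that is the setting in which the relevant up-down colorings are defined and ${\rm maxord}$ is the invariant being tracked.
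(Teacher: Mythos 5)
Your overall skeleton (a per-move analysis combined with the triangle inequality, with fact (A) handled by the coloring correspondence from Theorem~\ref{thm:1}) matches the paper's, and (A) is fine in substance once rephrased correctly: the bijection shows that $D_{i-1}$ is $n$-up-down colorable if and only if $D_i$ is, for every $n$, hence ${\rm maxord}$ is unchanged. (It is not that a bijection ``preserves the order of each coloring''; ${\rm maxord}(D)$ is not a quantity attached to an individual coloring but the largest modulus $n$ for which an $n$-up-down coloring of $D$ exists at all.) The genuine gap is in fact (B), which is where the content of the theorem lives. Your central claim --- that a coloring $C$ of $D_{i-1}$ always extends across the new bigon to a coloring $\widehat C$ of $D_i$, and conversely restricts back --- is false. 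At both new crossings the same strand is the over-strand, so by the rule $C(o_2)=C(o_1)+1$, $C(u_2)=C(u_1)-1$ its color increases by $2$ from entry to exit of the move disk while the other strand's color decreases by $2$; before the move each strand carried a single constant color through the disk. The forced colors are therefore \emph{not} ``consistent with the colors along the boundary of the disk'' in general, and $n$-colorability genuinely changes under an RI\hspace{-1pt}I-move: the paper's $T(1)$ is not $4$-up-down colorable while $T(2)$ is. If your (B) were correct as argued, ${\rm maxord}$ would be invariant under every move and the theorem would assert nothing.

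The repair is the paper's computation. For a $2$-component diagram, travel along one component from a fixed semi-arc back to itself: each self-crossing is traversed once as an over-strand and once as an under-strand, so its contributions cancel, and the total color defect is $o-u$, where $o$ and $u$ are the numbers of non-self over- and under-crossings on that component. Hence $D$ is $n$-up-down colorable if and only if $n\mid(o-u)$, so ${\rm maxord}(D)=|o-u|$. An RI\hspace{-1pt}I-move between strands of distinct components replaces $u$ by $u+2$ (or $o$ by $o+2$), changing $|o-u|$ by at most $2$; a move between strands of the same component changes neither count. This is also precisely where the $2$-component hypothesis enters --- it makes ${\rm maxord}$ equal to the single integer $|o-u|$ --- whereas your write-up only gestures at its role.
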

\begin{theorem}\label{thm:3}
Let $D$ and $D'$ be diagrams which represent the same virtual-knot. 
If $\Phi_f(D)\not=\Phi_f(D')$ as multi-sets, then any sequence of generalized Reidemeister moves between $D$ and $D'$ includes at least one Reidemeister move of type I\hspace{-1pt}I.
\end{theorem}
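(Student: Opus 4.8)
The plan is to mimic the standard argument showing that quandle cocycle invariants are invariant under Reidemeister moves of types~I and~III (and, in the virtual setting, under the virtual and semi-virtual moves), while tracking precisely how the multi-set $\Phi_f(D)$ changes under a single move of type~II. The key point will be that every generalized Reidemeister move \emph{other} than type~II leaves $\Phi_f(D)$ unchanged as a multi-set, so that if $\Phi_f(D)\neq\Phi_f(D')$ then any sequence of moves connecting $D$ to $D'$ must contain at least one type~II move. Concretely, I would first recall from the earlier sections the definition of an up-down coloring and of the weight contributed by each classical crossing via the cocycle $f$, so that $\Phi_f(D)$ is the multi-set whose elements, indexed by the $n$-up-down colorings of $D$, are the sums of crossing weights. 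The heart of the proof is then a local, move-by-move verification.

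First I would handle the moves that do not create or destroy classical crossings: the purely virtual moves (the virtual analogue of R1, R2, R3 for virtual crossings) and the mixed/semi-virtual move involving one classical crossing. For these, one sets up the obvious bijection between up-down colorings of the two local pictures — an up-down coloring assigns data to arcs, and these moves only reroute arcs through virtual crossings without changing the combinatorics at classical crossings — and checks that the total weight is preserved under this bijection. In most cases the crossing weights on both sides are literally the same crossings, so there is nothing to compute; for the semi-virtual move one must check the up-down coloring rule is compatible, which should follow from how up-down colorings were defined earlier in the excerpt. Next I would treat the classical R1 move: it adds a single classical crossing, and one must check that the weight of that new crossing is zero for every up-down coloring. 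This is exactly the ``type~1 cocycle condition'' analogue, and I expect it to be forced either by the cocycle condition satisfied by $f$ or by a degeneracy/normalization condition on $f$ that was imposed in the definition of $\Phi_f$; I would point to that hypothesis. Then the classical R3 move: here the three classical crossings are replaced by three classical crossings, there is a natural bijection on colorings, and the equality of weight-sums is precisely the $3$-cocycle (or quandle cocycle) condition on $f$ — again a routine but essential local computation that I would state and then defer to the standard quandle-cocycle argument, adapted to up-down colorings.

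Having established that $\Phi_f$ is invariant under all generalized Reidemeister moves except type~II, the theorem follows immediately: if $D$ and $D'$ represent the same virtual-knot, some finite sequence of generalized Reidemeister moves connects them; if that sequence contained no type~II move, then $\Phi_f$ would be preserved along it, giving $\Phi_f(D)=\Phi_f(D')$, contrary to hypothesis. Hence every such sequence contains at least one type~II move.

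The main obstacle I anticipate is not the R3/cocycle computation — that is the familiar backbone of quandle cocycle invariance — but rather getting the bookkeeping exactly right for the up-down colorings under the virtual and semi-virtual moves, since an up-down coloring carries orientation-sensitive ``up/down'' information in addition to a quandle-type color, and one must be sure that the bijection of colorings across a virtual move respects \emph{both} pieces of data and that no spurious weight is introduced at a virtual crossing (virtual crossings should contribute nothing). A secondary subtlety is confirming that the move of type~II genuinely can change $\Phi_f$ — this is what makes the hypothesis $\Phi_f(D)\neq\Phi_f(D')$ non-vacuous — but for the logical content of the theorem it suffices to know that type~II is the \emph{only} move not covered by the invariance argument, so I would relegate any example witnessing an actual change to a remark or to the companion results (Theorems~\ref{thm:1} and~\ref{thm:2}).
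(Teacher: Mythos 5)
Your proposal is correct and follows essentially the same route as the paper: establish that every generalized Reidemeister move other than RI\hspace{-1.5pt}I preserves the multi-set $\Phi_f$ (RI via the degeneracy condition $f(a,a,\varepsilon)=0$, the eight oriented RI\hspace{-1.5pt}I\hspace{-1.5pt}I-moves via the eight cocycle conditions (1)--(8), and the virtual/mixed moves trivially), then conclude by contraposition. The only detail you leave implicit is that the RI\hspace{-1.5pt}I\hspace{-1.5pt}I verification requires one condition per oriented move rather than a single quandle-style cocycle identity, but that is exactly what the paper's definition of an up-down cocycle supplies.
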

\noindent As an application of Theorem~\ref{thm:3}, we have the following result.
\begin{theorem}\label{thm:4}
For any virtual-knot diagram $D$, there exists a diagram $D'$ representing the same virtual-knot such that any sequence of generalized Reidemeister moves between $D$ and $D'$ includes at least one Reidemeister move of type I\hspace{-1pt}I. 
\end{theorem}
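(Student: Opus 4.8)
The plan is to derive Theorem~\ref{thm:4} from Theorem~\ref{thm:3}. By the latter it is enough, for a given virtual-knot diagram $D$, to produce a diagram $D'$ of the same virtual-knot with $\Phi_f(D')\neq\Phi_f(D)$. I would fix the weight function $f$ (and the underlying up-down coloring structure) to be one for which the preceding analysis of the trivial virtual-knot yields a diagram $K_0$ of the trivial virtual-knot with $\Phi_f(K_0)\neq\Phi_f(O)$, where $O$ denotes the $0$-crossing diagram. Then I set $D':=D\# K_0$, the connected sum formed along an arbitrary edge of $D$ and an arbitrary edge of $K_0$ (equivalently, a local insertion of $K_0$ into a disk that meets $D$ in a single arc). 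Since forming the connected sum with a diagram of the trivial virtual-knot does not change the virtual-knot type --- the $K_0$-summand can be contracted by generalized Reidemeister moves performed inside it --- the diagram $D'$ represents the same virtual-knot as $D$.

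The main step is then to prove $\Phi_f(D')\neq\Phi_f(D)$, for which I would set up a connected-sum additivity for up-down colorings: an up-down coloring of $D\# K_0$ amounts to a pair of up-down colorings, one of $D$ and one of $K_0$, that agree on all the data carried by the connecting edge, and the Boltzmann weight of such a coloring is the sum of the weights of the two pieces, since the connecting edge carries no crossing. Hence $\Phi_f(D\# K_0)$ is the multiset $\{\, w_D(c)+w_{K_0}(c') : (c,c')\ \text{compatible} \,\}$, whereas $\Phi_f(D)=\Phi_f(D\# O)$ is $\{\, w_D(c)+w_O(c'') : (c,c'')\ \text{compatible} \,\}$ with every $w_O(c'')=0$. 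Organizing these multisets according to the color a coloring assigns to the connecting edge and invoking $\Phi_f(K_0)\neq\Phi_f(O)$ then yields $\Phi_f(D')\neq\Phi_f(D)$, and Theorem~\ref{thm:3} completes the proof.

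The step I expect to be the main obstacle is this last multiset comparison in the extreme case where $D$ has very few up-down colorings --- say only the constant ones, so that the only color available on the connecting edge is a single fixed $x_0$. One must then know that, among the $x_0$-colored up-down colorings of $K_0$, the total Boltzmann weight is still nonzero, and that the numbers of $x_0$-colored colorings of $K_0$ and of $O$ are matched well enough that the discrepancy is not washed out when summing over the up-down colorings of $D$. Arranging this forces a careful choice of $K_0$ and of the coefficient group of $f$: one wants the group to be torsion-free or of odd order, so that the ``doubled'' weight $2f(\cdot)$ contributed by the two crossings that a Reidemeister move of type I\hspace{-1.5pt}I introduces --- which, in contrast to an ordinary cocycle invariant, gets added rather than cancelled --- never vanishes, and one wants $K_0$ to behave uniformly in the connecting color. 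Carrying out this verification, together with the bookkeeping of the extra up-down data along the connecting edge, is the technical heart; the reduction to Theorem~\ref{thm:3} is purely formal.
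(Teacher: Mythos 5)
Your overall strategy is exactly the paper's: it sets $D'=\Delta\sharp D$, where $\Delta$ is the two-real-crossing diagram of the trivial virtual-knot from Example~\ref{exam:OD} with $\Phi_f^{\rm shift}(\Delta)=1\neq 0$ in $\mathbb Z_4$, proves additivity of the invariant under connected sum (Lemma~\ref{lemma:additive}), and applies Theorem~\ref{thm:3} in the form of Corollary~\ref{cor:cocycleinv}. However, the ``technical heart'' you anticipate is not where the paper has to work, and two of your concerns rest on misconceptions. First, by Lemma~\ref{coloringnumber} every virtual-knot diagram has exactly $n$ up-down colorings, all obtained from any one of them by adding constants (Corollary~\ref{cor:shift}); so the ``extreme case where $D$ has only the constant colorings'' cannot occur, and each color on the connecting semi-arc is realized by exactly one coloring of each summand. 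Second, and more importantly, the paper sidesteps all multiset bookkeeping by choosing $f$ \emph{shiftable}, i.e.\ $f(a+1,b+1,\varepsilon)=f(a,b,\varepsilon)$, as in Example~\ref{exam:cocycle}: then $W_f(D,C)$ is independent of the coloring $C$, the multiset $\Phi_f(D)$ degenerates to a single value $\Phi_f^{\rm shift}(D)$ repeated $n$ times, and the additivity $\Phi_f^{\rm shift}(\Delta\sharp D)=\Phi_f^{\rm shift}(\Delta)+\Phi_f^{\rm shift}(D)$ is immediate. Finally, your requirement that the coefficient group be torsion-free or of odd order is unnecessary and misreads what must be shown: one never needs the two crossings created by an RI\hspace{-1.5pt}I-move to contribute a nonzero weight (they contribute terms of the form $f(\cdot,\cdot,+)+f(\cdot,\cdot,-)$, not $2f(\cdot)$, since the two new crossings have opposite signs), because Theorem~\ref{thm:3} only requires $\Phi_f(D)\neq\Phi_f(D')$, which here follows from $1\neq 0$ in $\mathbb Z_4$. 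With the shiftable cocycle in place your argument closes up and coincides with the paper's.
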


The rest of the paper is organized as follows: In Section 2, we review the definitions of virtual-links and the generalized Reidemeister moves. In Section 3, we introduce up-down colorings for virtual-link diagrams and prove Theorems~\ref{thm:1} and \ref{thm:2}. 
In Section 4, we define a multi-set $\Phi_f(D)$ which is analogous to quandle cocycle invariants  and prove Theorems~\ref{thm:3} and \ref{thm:4}. 

\section{Virtual-links}

A {\it classical-link}  with  $r$-components  is $r$ circles embedded  in $\mathbb{R}^3$ ($r=1,2, \dots $).  
We call  a classical-link with $r=1$ a {\it classical-knot}. 
A {\it diagram} of a classical-link $L$ is the image $p(L)$ of $L$ by a regular projection $p: \mathbb R^3 \to \mathbb R^2$ with over/under information at each crossing.
We call such a crossing a {\it real-crossing}, see the left picture of Figure \ref{crossing}.
\begin{figure}[h]
\begin{center}
\includegraphics[width=40mm]{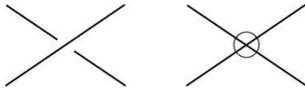}
\caption{A real-crossing and a virtual-crossing.}
\label{crossing}
\end{center}
\end{figure}
It is known that two classical-link diagrams represent the same classical-link if and only if there exists a finite sequence of the Reidemeister moves of type I, type I\hspace{-0.5pt}I or type I\hspace{-0.5pt}I\hspace{-0.5pt}I between them, where the Reidemeister moves are the local transformations (two RIs, an RI\hspace{-0.5pt}I and  two RI\hspace{-0.5pt}I\hspace{-0.5pt}Is) on classical-link diagrams depicted in Figure \ref{g-reidemeister}, see \cite{reidemeister}. 
\begin{figure}[h]
\begin{center}
\includegraphics[width=120mm]{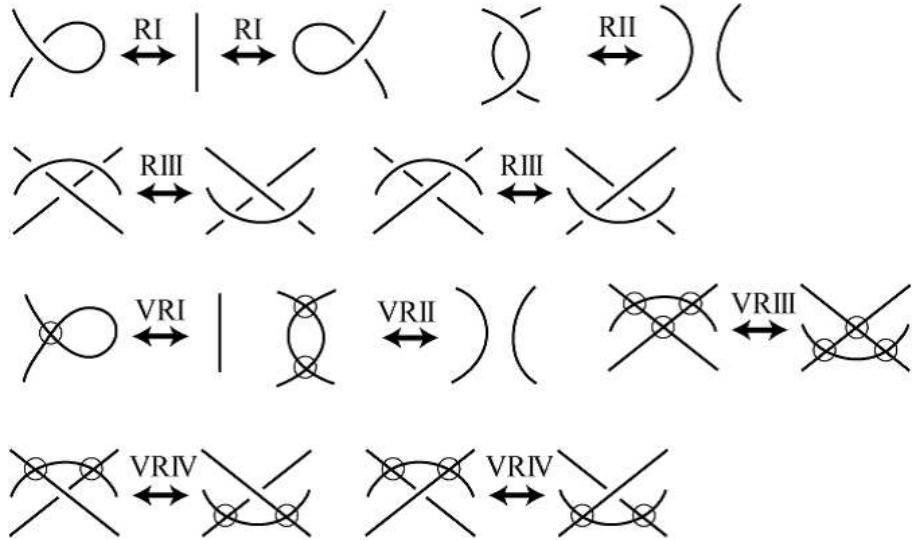}
\caption{The Generalized Reidemeister moves.}
\label{g-reidemeister}
\end{center}
\end{figure}

A {\it virtual-link diagram} is generic closed curves in $\mathbb R^2$
each of whose double points  is a real-crossing  or a virtual-crossing depicted in Figure \ref{crossing}. 
We call each closed curve a {\it component} of the virtual-link diagram, and a virtual-link diagram with $r$-components an {\it $r$-component virtual-link diagram}.
A $1$-component virtual-link diagram is also called   a {\it virtual-knot diagram}.
It is said that two virtual-link diagrams {\it represent the same virtual-link} if there exists a finite sequence of the generalized Reidemeister moves which are depicted in Figure \ref{g-reidemeister} between them. 
In this way, classical-links are expanded to virtual-links. 
For more details, see \cite{kauffman}. 

In this paper, we assume that virtual-link diagrams are oriented, and then,  generalized Reidemeister moves mean oriented generalized Reidemeister moves.
We call a  Reidemeister move of type I, of type I\hspace{-0.5pt}I and  of  type I\hspace{-0.5pt}I\hspace{-0.5pt}I   an {\it RI-move}, an {\it RI\hspace{-0.5pt}I-move} and an {\it RI\hspace{-0.5pt}I\hspace{-0.5pt}I-move}, respectively.
Similarly, for the other generalized Reidemeister moves, we call them a {\it VRI}-move, a {\it VRI\hspace{-0.5pt}I}-move, a {\it VRI\hspace{-0.5pt}I\hspace{-0.5pt}I}-move and a {\it VRI\hspace{-0.5pt}V}-move, respectively.  

\section{Up-down colorings and the necessity of Reidemeister moves of type I\hspace{-1pt}I}
 We note again that we may read this section by replacing  ``virtual" with ``classical" and ``generalized Reidemeister moves" with ``Reidemeister moves".

In this section, we define an up-down coloring for a virtual-link diagram and investigate its properties. 

Let $n$ be a positive integer and $\mathbb Z_n$ the cyclic group $\mathbb Z/ n \mathbb Z$. 

Let $D$ be a virtual-link diagram. 
The diagram $D$ is separated into the small edges such that the end points of each edge are real-crossings and there is no real-crossing in the interior of each edge.
We call such an edge a {\it semi-arc} of $D$.   
Let $\mathcal{SA}(D)$ denote the set of semi-arcs of $D$.

\begin{definition}
An \textit{$n$-up-down coloring} of $D$ is a map $C: \mathcal{SA}(D) \to \mathbb Z_n$ which  satisfies the following condition:
For a real-crossing $c$ of $D$, let $u_1$, $u_2$ (resp. $o_1$, $o_2$) be the under-semi-arcs (resp. over-semi-arcs) around $c$  such that the orientation of $D$ points from $u_1$ to $u_2$  (resp. from $o_1$ to $o_2$). Then it holds that  
\begin{align}\label{condi1}
C(u_2)=C(u_1)-1 \mbox{\  and \ } C(o_2)=C(o_1)+1.
\end{align}
See Figure~\ref{up-down}. 
\begin{figure}[h]
\begin{center}
\includegraphics[width=60mm]{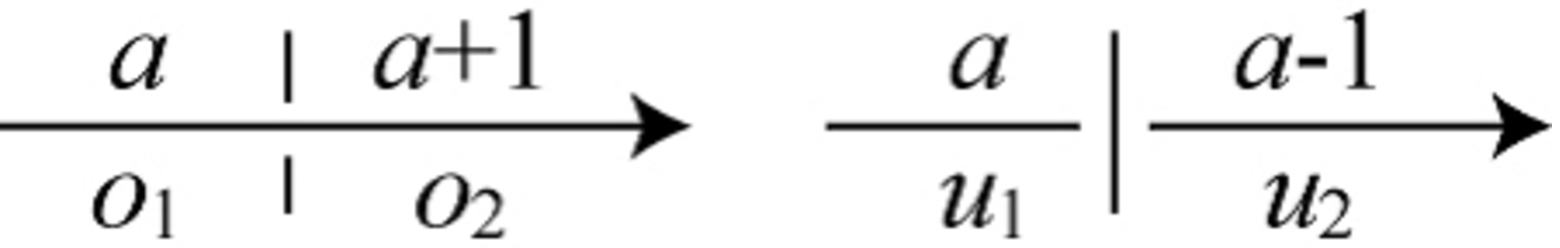}
\caption{The conditions of an up-down coloring.}
\label{up-down}
\end{center}
\end{figure}
When we do not specify $n$, we also call it an {\it up-down coloring} of $D$ for simplicity.
For each semi-arc $e$ of $D$, we call $C(e)$ the {\it color} of $e$.
\end{definition}

\begin{remark}
Up-down colorings are regarded as a generalization of the warping degree labellings defined in \cite{shimizu}, and see also \cite{lowrance}. 
\end{remark}

We denote by ${\rm Col}_{n}(D)$ the set of $n$-up-down colorings of $D$. 
Since $\mathcal{SA}(D)$ and $\mathbb Z_n$ are finite sets, so is ${\rm Col}_{n}(D)$.
Therefore, we call the cardinality of ${\rm Col}_{n}(D)$ the \textit{number of $n$-up-down colorings} of $D$, and denote it by $\#{\rm Col}_{n}(D)$.

Now, we consider about  the numbers of $n$-up-down colorings in the cases of virtual-``knots''.
\begin{lemma}\label{coloringnumber}
For any virtual-knot diagram $D$, 
$
\# {\rm Col}_{n}(D) =n.
$
\end{lemma}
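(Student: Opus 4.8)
The plan is to exhibit an explicit bijection $\mathrm{Col}_n(D)\to\mathbb{Z}_n$, by showing that an $n$-up-down coloring of a virtual-knot diagram is uniquely determined by the color assigned to one fixed semi-arc, and that conversely every element of $\mathbb{Z}_n$ can be used as that color. The key observation, which is really the only content of the lemma, is that as one travels once around a knot diagram each real-crossing is encountered exactly twice, once along its over-strand and once along its under-strand, so the increments $+1$ and $-1$ prescribed by condition~\eqref{condi1} cancel when the loop closes up.

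First I would dispose of the case $k=0$ (no real-crossing): then $\mathcal{SA}(D)$ is a single semi-arc, \eqref{condi1} is vacuous, and $\#\mathrm{Col}_n(D)=n$ trivially. So assume $D$ has $k\ge 1$ real-crossings, hence $|\mathcal{SA}(D)|=2k$. Pick a non-crossing base point $p$ on $D$ and traverse $D$ once following its orientation, recording the semi-arcs in the order met as $e_1,e_2,\dots,e_{2k}$ with $p\in e_1$; consecutive semi-arcs $e_i,e_{i+1}$ (indices mod $2k$) are separated by a passage through a real-crossing, since the endpoints of semi-arcs are precisely the real-crossings. For each such passage put $\varepsilon_i=+1$ if there we travel along the over-strand and $\varepsilon_i=-1$ if along the under-strand. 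Reading \eqref{condi1} at the over-semi-arcs and under-semi-arcs around each crossing, every $C\in\mathrm{Col}_n(D)$ satisfies $C(e_{i+1})=C(e_i)+\varepsilon_i$, hence $C(e_{i+1})=C(e_1)+\sum_{j=1}^{i}\varepsilon_j$ for $1\le i\le 2k-1$; this already gives $\#\mathrm{Col}_n(D)\le n$.

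For the reverse inequality I would fix $a\in\mathbb{Z}_n$, define $C(e_1)=a$ and $C(e_{i+1})=a+\sum_{j=1}^{i}\varepsilon_j$, and check that this $C$ is a legitimate up-down coloring. Well-definedness is the closing-up relation $\sum_{j=1}^{2k}\varepsilon_j\equiv 0\pmod n$, which holds because each of the $k$ real-crossings is passed exactly twice during the traversal, once over (contributing $+1$) and once under (contributing $-1$), so the sum is $k-k=0$ even in $\mathbb{Z}$. Validity of \eqref{condi1} at a crossing $c$ then follows from the same two passages: the over-passage gives $C(o_2)=C(o_1)+1$ and the under-passage gives $C(u_2)=C(u_1)-1$; virtual-crossings impose no condition since semi-arcs pass through them by definition. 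Thus $C\in\mathrm{Col}_n(D)$ and $C\mapsto C(e_1)$ is a bijection onto $\mathbb{Z}_n$, giving $\#\mathrm{Col}_n(D)=n$. I do not expect a genuine obstacle here; the only care needed in the write-up is the bookkeeping that correctly identifies, at a crossing visited on its over- (resp. under-) strand, which of $e_i,e_{i+1}$ plays the role of $o_1,o_2$ (resp. $u_1,u_2$).
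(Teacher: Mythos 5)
Your proposal is correct and follows essentially the same route as the paper: fix a base semi-arc, note that the coloring is determined by propagating $\pm 1$ along the traversal, and observe that the loop closes up because each real-crossing of a knot diagram is passed once on its over-strand and once on its under-strand, so the increments sum to zero. Your write-up merely makes explicit the bookkeeping that the paper's shorter proof leaves implicit.
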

\begin{proof}
Choose a semi-arc $e$ of $D$ and fix it. 
We pass through the over-crossings  as many as the under-crossings while we travel along the diagram $D$ from $e$ to $e$ according to the orientation of $D$.
This implies that for any $a \in \mathbb{Z}_n$, 
we have a unique up-down coloring of $D$ such that the color of $e$ is $a$. 
Therefore the number of $n$-up-down colorings of $D$ coincides with that of choices of elements of $\mathbb Z_n$.
\end{proof}
\noindent Moreover, we have the following lemma: 
\begin{lemma}
Let $D$ be a virtual-knot (or virtual-link) diagram and $C$ an $n$-up-down coloring of $D$.
Set  a map $C': \mathcal{SA}(D) \to \mathbb Z_n$  by  $C'(e)=C(e)+1$ for $e \in  \mathcal{SA}(D)$. 
Then the  map $C'$ is also an $n$-up-down coloring of $D$. 
\label{plus-one}
\end{lemma}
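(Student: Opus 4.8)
The plan is to verify directly that $C'$ satisfies the defining condition~\eqref{condi1} of an $n$-up-down coloring, which is a short computation. The key observation is that the condition is stated in terms of \emph{differences} of colors around each real-crossing, and adding the constant $1$ to every semi-arc does not change any such difference. Concretely, I would pick an arbitrary real-crossing $c$ of $D$, name the under-semi-arcs $u_1, u_2$ and over-semi-arcs $o_1, o_2$ around $c$ as in the definition (with the orientation pointing from $u_1$ to $u_2$ and from $o_1$ to $o_2$), and then compute
\begin{align*}
C'(u_2) &= C(u_2)+1 = \bigl(C(u_1)-1\bigr)+1 = \bigl(C(u_1)+1\bigr)-1 = C'(u_1)-1,\\
C'(o_2) &= C(o_2)+1 = \bigl(C(o_1)+1\bigr)+1 = \bigl(C(o_1)+1\bigr)+1 = C'(o_1)+1,
\end{align*}
where the middle equalities use that $C$ is an $n$-up-down coloring.

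Since $c$ was an arbitrary real-crossing, this shows $C'$ satisfies condition~\eqref{condi1} at every real-crossing, and $C'$ is by construction a well-defined map $\mathcal{SA}(D)\to\mathbb{Z}_n$ (the translation $x\mapsto x+1$ is a well-defined self-map of $\mathbb{Z}_n$). Hence $C'\in{\rm Col}_n(D)$, which is the claim. Note that the statement holds for virtual-link diagrams with any number of components, since the argument is purely local at real-crossings and never uses connectivity; virtual-crossings impose no condition on an up-down coloring and so are irrelevant.

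There is essentially no obstacle here — the lemma is a formal consequence of the fact that~\eqref{condi1} only constrains color differences. The only thing worth a remark is that the same argument shows more generally that $C\mapsto C+a$ is a bijection of ${\rm Col}_n(D)$ onto itself for every $a\in\mathbb{Z}_n$ (the inverse being $C\mapsto C-a$); this gives a free $\mathbb{Z}_n$-action on ${\rm Col}_n(D)$, which is presumably the point of recording the lemma for later use in analyzing $\#{\rm Col}_n$ and the invariants built from up-down colorings.
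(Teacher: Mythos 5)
Your proof is correct and is essentially identical to the paper's: both verify condition~(\ref{condi1}) directly at an arbitrary real-crossing by the same one-line computation showing that adding a constant preserves the color differences. The extra remarks about the $\mathbb{Z}_n$-action are accurate and consistent with how the paper uses the lemma (Corollary~\ref{cor:shift}).
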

\begin{proof}
By using the equation (\ref{condi1}) of the condition of an up-down coloring, we have 
$$C'(u_2)= C(u_2)+1 = (C(u_1)-1)+1= (C(u_1)+1) -1= C'(u_1)-1$$
and
$$C'(o_2)= C(o_2)+1 = (C(o_1)+1)+1= C'(o_1)+1.$$
Therefore  $C'$ also satisfies the condition (\ref{condi1}) of an up-down coloring.
\end{proof}
\noindent By Lemmas~\ref{coloringnumber} and \ref{plus-one}, next property holds:
\begin{corollary}\label{cor:shift}
For a virtual-knot diagram $D$ and an $n$-up-down coloring $C$ of $D$,
\[
{\rm Col}_{n}(D)= \{ C+i ~|~ i \in \mathbb Z_n \},
\] 
where $C+i$ is the map from $\mathcal{SA}(D)$to $ \mathbb Z_n$ which maps a semi-arc $e$ to $ C(e)+i$.
\end{corollary}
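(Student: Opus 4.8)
The plan is to deduce the corollary by combining Lemmas~\ref{coloringnumber} and \ref{plus-one}: Lemma~\ref{plus-one} produces a family of $n$ colorings by iterated shifting, and Lemma~\ref{coloringnumber} says there are exactly $n$ colorings in total, so the family must be all of them.

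First I would check the inclusion $\{C+i \mid i\in\mathbb{Z}_n\}\subseteq{\rm Col}_n(D)$. This is an induction over a set of representatives $0,1,\dots,n-1$ of $\mathbb{Z}_n$: the case $i=0$ is the hypothesis that $C$ is an $n$-up-down coloring, and the inductive step is exactly Lemma~\ref{plus-one} applied to the coloring $C+i$, since $(C+i)+1=C+(i+1)$ as maps $\mathcal{SA}(D)\to\mathbb{Z}_n$.

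Second I would verify that the $n$ maps $C+0,C+1,\dots,C+(n-1)$ are pairwise distinct. Picking any semi-arc $e$ of $D$ (such an $e$ exists, as already used in the proof of Lemma~\ref{coloringnumber}), the values $(C+i)(e)=C(e)+i$ for $i=0,\dots,n-1$ are the $n$ distinct elements $C(e),C(e)+1,\dots,C(e)+(n-1)$ of $\mathbb{Z}_n$; hence distinct indices give maps that differ on $e$. Therefore $\bigl|\{C+i \mid i\in\mathbb{Z}_n\}\bigr|=n$.

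Finally, Lemma~\ref{coloringnumber} gives $\#{\rm Col}_n(D)=n$, so $\{C+i\mid i\in\mathbb{Z}_n\}$ is a subset of cardinality $n$ sitting inside the $n$-element set ${\rm Col}_n(D)$, which forces the two to coincide. I do not expect a genuine obstacle here; the only mild subtlety is the distinctness count, which uses that $D$ has at least one semi-arc, without which the cardinality comparison would not close the argument.
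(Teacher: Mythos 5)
Your proposal is correct and matches the paper's (implicit) argument: the paper derives the corollary directly from Lemmas~\ref{coloringnumber} and \ref{plus-one} in exactly the way you describe, namely iterated shifting gives $n$ distinct colorings and the count from Lemma~\ref{coloringnumber} forces these to exhaust ${\rm Col}_n(D)$. Your added care about distinctness and the existence of a semi-arc is a reasonable elaboration of what the paper leaves unstated.
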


Next, let us consider about the numbers of $n$-up-down colorings   in the cases of virtual-``links''.
In the cases of virtual-links with at least $2$-components, the number of $n$-up-down colorings depends on the choice of a diagram $D$. 
For example, $T(1)$ does not have a $4$-up-down coloring, while $T(2)$ does,  where $T(1)$ and $T(2)$ are the virtual-link diagrams, representing the same virtual-link, depicted in Figure~\ref{ti}.
However, Theorem~\ref{thm:1} shows that the numbers of $n$-up-down colorings are useful to detect the necessity of RI\hspace{-1.5pt}I-moves.

\phantom{x}
\noindent {\it Proof of Theorem~\ref{thm:1}.}
It suffices to show that the number of $n$-up-down colorings of  a virtual-link diagram is unchanged  under the generalized Reidemeister moves except for the RI\hspace{-1.5pt}I-moves.

Let $D$ and $D'$ be virtual-link diagrams such that $D'$ is obtained from $D$ by a single generalized Reidemeister move other than the RI\hspace{-1pt}I-moves. Let $E$ be a $2$-disk in $\mathbb R^2$ in which the move  is applied. Let $C$ be an $n$-up-down coloring of $D$. 
We define an  $n$-up-down coloring $C'$ of $D'$, corresponding to $C$, by $C' (e) = C(e)$ for a semi-arc $e$  appearing in $\mathbb R^2- E$.
Then the colors of the semi-arcs  appearing  in $E$, by $C'$, are uniquely determined, see Figure \ref{r1r3} for the RI-moves, Figure \ref{r3} for the RI\hspace{-1pt}I\hspace{-1pt}I-moves and Figure \ref{vr4} for the VRI\hspace{-1pt}V-moves. 
Thus we have a bijection
\[
{\rm Col}_{n}(D) \to {\rm Col}_{n}(D); C \mapsto C'.
\]
\hfill \qed

\phantom{x}

A virtual-link diagram $D$ is {\it $n$-up-down colorable} if there exists an $n$-up-down coloring  of $D$. 
The {\it maximum order of up-down colorings}  of a virtual-link digram $D$ is the maximum number of  positive integers $n$ such that $D$ is $n$-up-down colorable if it is finite, and $0$ otherwise.
We denote it by  ${\rm maxord}(D)$. 
By Theorem~\ref{thm:2}, we give an estimation of the minimum number of required RI\hspace{-1pt}I-moves between given two $2$-component virtual-link diagrams. 

\phantom{x}
\noindent {\it Proof of Theorem~\ref{thm:2}.}
By the proof of Theorem~\ref{thm:1}, it is easy to show that  
the generalized Reidemeister moves other than the RI\hspace{-1pt}I-moves do not change the maximum order of up-down colorings.
Hence, it is sufficient to show that for virtual-link diagrams $D$ and $D'$ which are related by a single RI\hspace{-1pt}I-move,  the value $|{\rm maxord}(D)-{\rm maxord}(D')|/2$ is at most one.

Let $D$ and $D'$ be virtual-link diagrams such that $D'$ is obtained from $D$ by the  RI\hspace{-1pt}I-move shown in Figure~\ref{e-disk}. 
Let $E$ be a $2$-disk in $\mathbb R^2$ in which the move  is applied. 
As in Figure~\ref{e-disk}, we denote by $e_1$ and $e_2$ (or $e_1'$ and $e_2'$) two semi-arcs of $D$ (or of $D'$) appeared in $E$.
Here, we assume that $e_1$ and $e_2$ (or $e_1'$ and $e_2'$) are in the distinct components.
By traveling along one component of the diagram $D$ from $e_1$ to $e_1$ according to the orientation of $D$, we read the colors of the semi-arcs which are passed through.
Thus we can see that $D$ is $n$-up-down colorable if and only if $C(e_1)= C(e_1)+o-u$ in $\mathbb Z_n$ (and $C(e_2)= C(e_2)-o+u$ in $\mathbb Z_{n}$) for some $n$-up-down coloring $C$, see Figure~\ref{e-disk}, where $o$ and $u$ is the numbers of the non-self  over-crossings and the non-self  under-crossings, respectively, which we passed through.
This implies that ${\rm maxord}(D)=|o-u|$. 
On the other hand, the numbers of the non-self  over-crossings and the non-self  under-crossings which are passed through while we travel along one component of the diagram $D'$ from $e_1'$ to $e_1'$ according to the orientation of $D'$ are $o$ and $u+2$, respectively.
Thus ${\rm maxord}(D')=|o-(u+2)|=|o-u-2|$ holds. 
Hence we have 
\[
 {\rm maxord}(D)-2 =|o-u |-2\leq |o-u-2|= {\rm maxord}(D'),
\]
and 
\[
{\rm maxord}(D')=|o-u-2|\leq |o-u| +2 = {\rm maxord}(D)+2.
\]
Therefore it holds that 
\[
| {\rm maxord}(D)- {\rm maxord}(D')|/2\leq 1.
\]
Similarly, we can see that in the case that $e_1$ and $e_2$ (or $e_1'$ and $e_2'$) are in the same component, we have 
\[
| {\rm maxord}(D)- {\rm maxord}(D')|/2=0\leq 1.
\]

For the other RI\hspace{-1pt}I-moves, we also have the same inequality.
\begin{figure}[h]
\begin{center}
\includegraphics[width=90mm]{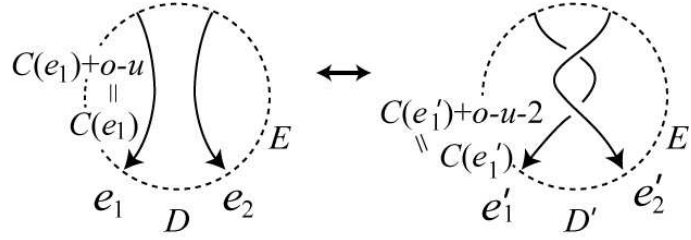}
\caption{An RI\hspace{-1pt}I-move and colorability by up-down colorings.}
\label{e-disk}
\end{center}
\end{figure}
\hfill \qed

\phantom{x}

\begin{example}
Let $\{T(i)\}_{i \in \{0, 1,2, \ldots \}}$ be  the family of the virtual-link diagrams depicted in Figure~\ref{ti}.
When  $i\in \{0, 1,2, \ldots  \}$, $T(i)$ is $n$-up-down colorable if and only if $2i \equiv 0 \pmod{n}$, see Figure~\ref{ti-label}.
Hence, the maximum order of up-down colorings of $T(i)$ is 
\[
{\rm maxord}(T(i))=2i.
\]
Hence, by Theorem~\ref{thm:2}, we can see that 
at least $|i-j|$ RI\hspace{-1.5pt}I-moves are needed to transform $T(i)$ to $T(j)$ by using generalized Reidemeister moves.  
Indeed, $|i-j|$ coincides with the minimum number of RI\hspace{-1pt}I-moves needed for transformations between $T(i)$ and $T(j)$.
\begin{figure}[h]
\begin{center}
\includegraphics[width=110mm]{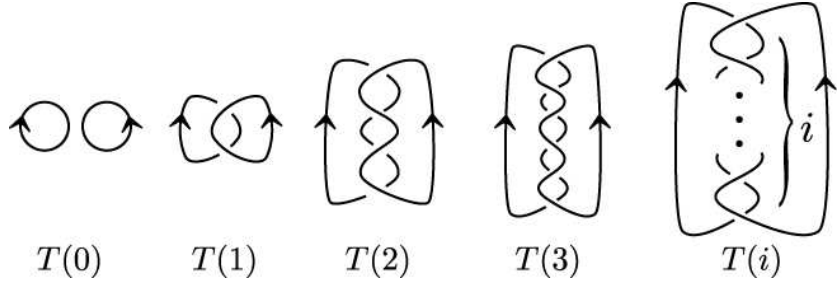}
\caption{The $2$-component virtual-link diagrams $T(i)$.}
\label{ti}
\end{center}
\end{figure}
\begin{figure}[h]
\begin{center}
\includegraphics[width=60mm]{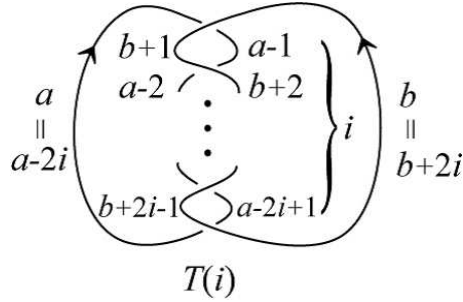}
\caption{Colorability of $T(i)$ by up-down colorings..}
\label{ti-label}
\end{center}
\end{figure}
\end{example}

\begin{example}
For the virtual-link diagrams $T(5)$ and $T'(5)$ in Figure \ref{t5}, the maximum orders of up-down colorings  are 
\[
{\rm maxord}(T(5))=10   \mbox{ and } {\rm maxord}(T'(5))=6.
\]
Therefore at least $2(=|10-6|/2)$ RI\hspace{-1pt}I-moves are needed to transform $T(5)$ to $T'(5)$ by using generalized Reidemeister moves.  
Indeed, $2$ coincides with the minimum number of RI\hspace{-1pt}I-moves needed for transformations between $T(5)$ and $T'(5)$.

Similarly, we can see that the minimum number of RI\hspace{-1pt}I-moves needed for transformations between $T(5)$ and $T''(5)$ (resp. $T'(5)$ and $T''(5)$) is $4$ (resp. $2$).
\begin{figure}[h]
\begin{center}
\includegraphics[width=90mm]{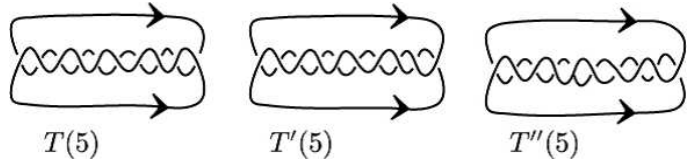}
\caption{The $2$-component virtual-link diagrams $T(5)$, $T'(5)$ and $T''(5)$.}
\label{t5}
\end{center}
\end{figure}
\end{example}

\begin{remark}
The minimum number of required RI\hspace{-1.5pt}I-moves between given two  $2$-component virtual-link diagrams can be also estimated by using the number of non-self real-crossings  since RI\hspace{-1.5pt}I-moves only change the number of  such real-crossings. 
More precisely, the half of the difference of the numbers of non-self real-crossings gives a lower bound of the number of required RI\hspace{-1.5pt}I-moves.
Hence, it is easily seen that 
two different diagrams $T(i)$ and $T(j)$ in the family $\{T(i)\}_{i\in \{0, 1,2, \ldots \}}$ of Example~\ref{ti} need at least $|i-j|$ RI\hspace{-1.5pt}I-moves to be transformed each other.  
However the minimum number  of RI\hspace{-1.5pt}I-moves needed for transformations between $T(5)$ and $T'(5)$ (or $T'(5)$ and $T''(5)$, $T(5)$ and $T''(5)$) in Figure~\ref{t5}  can not be detected by using the number of non-self real-crossings, but by using our method.  
%
\end{remark}

\section{Cocycle invariants using up-down colorings and the necessity of Reidemeister moves of type I\hspace{-1pt}I}\label{section:cocycleinvariants}
We note again that we may read this section except for the proof of Theorem~\ref{thm:4} by replacing  ``virtual" with ``classical" and ``generalized Reidemeister moves" with ``Reidemeister moves".

In this section, we define  a multi-set $\Phi_f(D)$ which is analogous to quandle cocycle invariants.

Let $n$ be a positive integer. 
Let $A$ be an abelian group.

\begin{definition}
An {\it $n$-up-down cocycle} is a map 
$f : \mathbb{Z}_n \times \mathbb{Z}_n \times \{ +, - \} \to A$ satisfying the following conditions: 
\begin{itemize}
\item For any $a \in \mathbb{Z}_n$ and $\varepsilon \in \{ +, - \}$, 
\begin{itemize}
\item[(0)] $f(a, a, \varepsilon )=0$.
\end{itemize} 
\item For any $a, b, c \in \mathbb{Z}_n$, 
\begin{itemize}
\item[(1)] $f(a-1, b, -)+f(b+1, c+1, +)+f(a-1, c+2, +)=f(a-2, b-1, -)+f(b, c+2, +)+f(a, c+1, +)$,
\item[(2)] $f(a-1, b, -)+f(b, c+1, -)+f(a-2, c, -)=f(a-2, b-1, -)+f(b-1, c, -)+f(a-1, c+1, -)$,
\item[(3)] $f(a-1, b+1, +)+f(b+1, c+1, +)+f(a-1, c+1, -)=f(a, b, +)+f(b, c+2, +)+f(a-2, c, -)$,
\item[(4)] $f(a-1, b+1, +)+f(b, c+1, -)+f(a, c+1, +)=f(a, b, +)+f(b-1, c, -)+f(a-1, c+2, +)$,
\item[(5)] $f(a, b+1, +)+f(b+1, c+2, +)+f(a-1, c+1, +)=f(a-1, b, +)+f(b, c+1, +)+f(a, c+2, +)$,
\item[(6)] $f(a, b+1, +)+f(b, c, -)+f(a-2, c+1, -)=f(a-1, b, +)+f(b-1, c+1, -)+f(a-1, c, -)$,
\item[(7)] $f(a-2, b, -)+f(b+1, c+2, +)+f(a-1, c, -)=f(a-1, b-1, -)+f(b, c+1, +)+f(a-2, c+1, -)$ and 
\item[(8)] $f(a-2, b, -)+f(b, c, -)+f(a, c+2, +)=f(a-1, b-1, -)+f(b-1, c+1, -)+f(a-1, c+1, +)$.
\end{itemize}
\end{itemize}
When we do not specify $n$, we call such a map an {\it up-down cocycle} for simplicity.
\end{definition}
\noindent 
We will later show that the above conditions are related to the RI-moves and the RI\hspace{-1.5pt}I\hspace{-1.5pt}I-moves, see the proof of Theorem~\ref{thm:3}. 
On the other hand, quandle cocycle conditions are also related to the RI-moves and  an RI\hspace{-1.5pt}I\hspace{-1.5pt}I-move, see \cite{CJKLS}. 
In that sense, we call a map satisfying the above conditions an up-down cocycle.
However we do not know if there exists a (co)homology theory which are related to up-down cocycles.

\begin{definition}
An $n$-up-down cocycle $f$ is {\it shiftable} if 
$f(a+1, b+1, \varepsilon )=f(a, b, \varepsilon )$ for any $a, b \in \mathbb{Z}_n$ and $\varepsilon \in \{ +, - \}$. 
\end{definition}

\begin{remark}
A map $f : \mathbb{Z}_n \times \mathbb{Z}_n \times \{ +, - \} \to A$ is a shiftable $n$-up-down cocycle if and only if it satisfies the following conditions (A)-(C):
\begin{itemize}
\item[(A)] For any $a \in \mathbb Z_n$ and $\varepsilon \in \{+, -\}$, $f(a,a,\varepsilon)=0$.
\item[(B)] For any $a,b \in \mathbb Z_n$ and $\varepsilon \in \{ +, -\}$, $f(a+1, b+1, \varepsilon)= f(a, b, \varepsilon)$.
\item[(C)] For any $a,b,c \in \mathbb Z_n$ and $\varepsilon \in \{ +, -\}$, the following equations (i)-(iii) hold:
\begin{itemize}
\item[(i)] $f(b+1, c+1, \varepsilon)+f(a-1, c+2, \varepsilon)=f(b, c+2, \varepsilon)+f(a, c+1, \varepsilon)$,
\item[(ii)] $f(a-1, b+1, \varepsilon)+f(b+1, c+1, \varepsilon)=f(a, b, \varepsilon)+f(b, c+2, \varepsilon)$ and
\item[(iii)] $f(a-1, b+1, \varepsilon)+f(a, c+1, \varepsilon)=f(a, b, \varepsilon)+f(a-1, c+2, \varepsilon)$.
\end{itemize}
\end{itemize}
\end{remark}
\begin{example}\label{exam:cocycle}
Define $f: \mathbb Z_4 \times \mathbb Z_4 \times \{+,-\} \to \mathbb Z_4$ by 
\phantom{x}
\begin{eqnarray*}
f(a,b,+)=\left\{ \begin{array}{ll}
0 & (a=b), \\
2 & (a=b+1),\\
2 & (a=b+2),\\
0 & (a=b+3)\mbox{ and}\\
\end{array} \right.
\end{eqnarray*}
\begin{eqnarray*}
f(a,b,-)=\left\{ \begin{array}{ll}
0 & (a=b), \\
1 & (a=b+1),\\
2 & (a=b+2),\\
3 & (a=b+3).\\
\end{array} \right.
\end{eqnarray*}
\noindent Then $f$ is a shiftable $4$-up-down cocycle. 
\end{example}

Let $D$ be a diagram of a virtual-knot and $C$ an $n$-up-down coloring of $D$.
Let $f: \mathbb Z_n \times \mathbb Z_n \times \{+,-\} \to A$ be an $n$-up-down cocycle.
For each real-crossing $c$ of $D$, we define the \textit{weight} $w_c$ of $c$ as follows:
Let $u_1$, $u_2$ (resp. $o_1$, $o_2$) be the under-semi-arcs (resp. over-semi-arcs) around $c$  such that the orientation of $D$ points from $u_1$ to $u_2$  (resp. from $o_1$ to $o_2$). 
\begin{itemize}
\item When $c$ is positive,   set $w_c= f(C(u_1), C(o_2), +)$. See Figure~\ref{function}.
\item When $c$ is negative,   set $w_c= f(C(u_2), C(o_1), -)$. See Figure~\ref{function}.
\end{itemize}
\begin{figure}[h]
\begin{center}
\includegraphics[width=75mm]{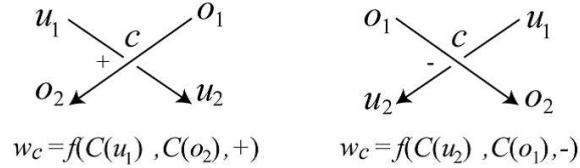}
\caption{The weight of a real-crossing $c$ with respect to an up-down coloring.}
\label{function}
\end{center}
\end{figure}
Let $W_f(D,C)$ be the sum of the weights for all the real-crossings of $D$, that is, 
\[
W_{f}(D,C)= \sum_{c} w_c.
\]
We denote by $\Phi_{f}(D)$ the multi-set 
\[
\big\{ W_{f}(D,C) ~|~ C\in \{ \mbox{$n$-up-down colorings of $D$} \} \big\}.
\]
Theorem~\ref{thm:3} implies that the multi-set $\Phi_{f}(D)$ is useful to detect the necessity of RI\hspace{-1.5pt}I-moves.

\phantom{x}
\noindent {\it Proof of Theorem~\ref{thm:3}.}
Let $D$ and $D'$ be virtual-knot diagrams such that $D'$ is obtained from $D$ by a single RI-move. 
For an $n$-up-down coloring $C$ of $D$,
we set an $n$-up-down coloring $C'$ of $D'$ so that $C$ is corresponding to $C'$ by the bijection defined in the proof of Theorem~\ref{thm:1}.
Then as shown in Figure~\ref{r1r3}, we have 
\[
W_{f}(D,C)-W_{f}(D',C')=\pm f(a,a, \varepsilon)
\]
for some $a \in \mathbb Z_n$ and $\varepsilon \in \{ +, -\}$.
By the condition (0) of an $n$-up-down cocycle, since $f(a,a, \varepsilon)=0$,  $W_{f}(D,C)=W_{f}(D',C')$ holds.

Next let us consider the cases of RI\hspace{-1.5pt}I\hspace{-1.5pt}I-moves. As shown in Figure~\ref{r3}, we have eight RI\hspace{-1.5pt}I\hspace{-1.5pt}I-moves (1)-(8). 
Let $D$ and $D'$ be virtual-knot diagrams such that $D'$ is obtained from $D$ by the  RI\hspace{-1.5pt}I\hspace{-1.5pt}I-move (1) in Figure~\ref{r3}. 
For an $n$-up-down coloring $C$ of $D$,
we set an $n$-up-down coloring $C'$ of $D'$ so that $C$ is corresponding to $C'$ by the bijection defined in the proof of Theorem~\ref{thm:1}.
Then we have 
\[
\begin{array}{l}
W_{f}(D,C)-W_{f}(D',C')\\
\hspace{1cm}=\pm\big\{ (f(a-1, b, -)+f(b+1, c+1, +)+f(a-1, c+2, +))\\
\hspace{2cm}-(f(a-2, b-1, -)+f(b, c+2, +)+f(a, c+1, +))\big\}
\end{array}
\]
for some $a,b,c \in \mathbb Z_n$, see Figure~\ref{r3}.
By the condition (1) of an $n$-up-down cocycle, since the right side of the above equation is equal to $0$, we have $W_{f}(D,C)=W_{f}(D',C')$.
Similarly, in the cases of the other RI\hspace{-1.5pt}I\hspace{-1.5pt}I-moves (2)-(8), by the conditions (2)-(8) of an $n$-up-down cocycle, respectively, we have $W_{f}(D,C)=W_{f}(D',C')$. 

It is obvious that 
the other generalized Reidemeister moves other than the RI\hspace{-1.5pt}I-moves also do not change the weight sum $W_f(D,C)$, see Figure~\ref{vr4} for the VRI\hspace{-1.5pt}V-moves.

As a consequence, we have  a bijection 
$\Phi_f(D) \to \Phi_f(D'), W_f(D,C) \mapsto W_f(D',C')$ such that $W_f(D,C)=W_f(D',C')$
for each of the generalized Reidemeister moves other than the RI\hspace{-1.5pt}I-moves. 
\hfill \qed

\phantom{x}

Now we assume that the $n$-up-down cocycle $f$ is shiftable. 
Then the value $W_{f}(D,C)$ does not depend on the choice of the $n$-up-down coloring $C$, see Figure~\ref{shift} and Corollary~\ref{cor:shift}.
\begin{figure}[h]
\begin{center}
\includegraphics[width=100mm]{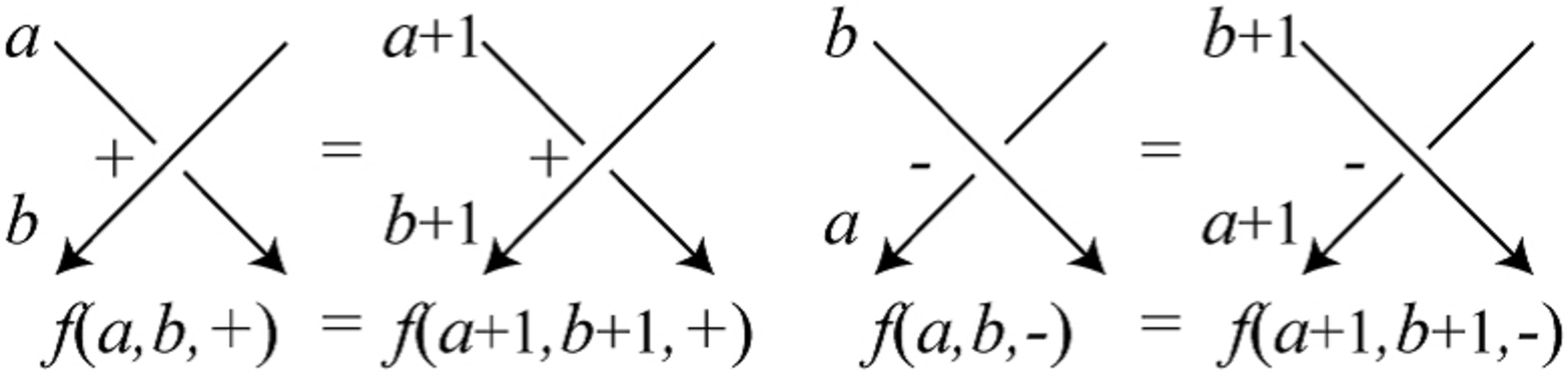}
\caption{The weight of a real-crossing with respect to up-down colorings $C$ and $C+1$}
\label{shift}
\end{center}
\end{figure}
Hence, we denote by $\Phi_{f}^{\rm shift}(D)$ the value $W_{f}(D,C)$ for some $n$-up-down coloring $C$. Note that $\Phi_{f}(D)$ and $\Phi_{f}^{\rm shift}(D)$ are essentially the same.
Next property is obtained as a corollary of Theorem~\ref{thm:3}. 
\begin{corollary}\label{cor:cocycleinv}
Let $D$ and $D'$ be diagrams which represent the same virtual-knot. 
If $\Phi_f^{\rm shift}(D)\not=\Phi_f^{\rm shift}(D')$, then any sequence of generalized Reidemeister moves between $D$ and $D'$ includes at least one RI\hspace{-1pt}I-move.
\end{corollary}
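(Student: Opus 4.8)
The plan is to derive this corollary directly from Theorem~\ref{thm:3} by making precise the remark that $\Phi_f(D)$ and $\Phi_f^{\rm shift}(D)$ carry the same information. First I would recall that, because $f$ is shiftable, the paragraph preceding the corollary together with Corollary~\ref{cor:shift} shows that $W_f(D,C)$ does not depend on the choice of the $n$-up-down coloring $C$; hence the multi-set $\Phi_f(D)$ consists of the single value $\Phi_f^{\rm shift}(D)$ occurring with multiplicity $\#{\rm Col}_n(D)$, and likewise $\Phi_f(D')$ consists of $\Phi_f^{\rm shift}(D')$ occurring with multiplicity $\#{\rm Col}_n(D')$.

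Next I would apply Lemma~\ref{coloringnumber}: since $D$ and $D'$ are virtual-knot diagrams, $\#{\rm Col}_n(D) = n = \#{\rm Col}_n(D')$, the integer $n$ being the same on both sides because $f$ is a fixed $n$-up-down cocycle. Therefore $\Phi_f(D)$ is $n$ copies of $\Phi_f^{\rm shift}(D)$ and $\Phi_f(D')$ is $n$ copies of $\Phi_f^{\rm shift}(D')$, so the hypothesis $\Phi_f^{\rm shift}(D) \not= \Phi_f^{\rm shift}(D')$ is equivalent to $\Phi_f(D) \not= \Phi_f(D')$ as multi-sets. Applying Theorem~\ref{thm:3} then yields that any sequence of generalized Reidemeister moves between $D$ and $D'$ must contain at least one RI\hspace{-1pt}I-move, which is the desired conclusion.

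I do not expect any real obstacle in this argument; it is a short bookkeeping deduction. The only points deserving an explicit word are that both $D$ and $D'$ represent a virtual-knot --- which is part of the hypothesis and is exactly what allows Lemma~\ref{coloringnumber} to be invoked so that the multiplicities agree --- and that the same $n$ is used throughout, which is automatic since the cocycle $f$ is fixed.
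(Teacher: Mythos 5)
Your argument is correct and matches the paper's intent exactly: the paper states the corollary as an immediate consequence of Theorem~\ref{thm:3} after noting that, for a shiftable cocycle, $W_f(D,C)$ is independent of $C$ so that $\Phi_f(D)$ and $\Phi_f^{\rm shift}(D)$ are essentially the same. Your write-up simply makes the bookkeeping (via Corollary~\ref{cor:shift} and Lemma~\ref{coloringnumber}) explicit, which is a faithful elaboration rather than a different route.
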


\begin{lemma}\label{lemma:additive}
Let $D_1$ and $D_2$ be virtual-knot diagrams.
For any shiftable up-down cocycle $f$ and for any connected sum $D=D_1 \sharp D_2$ of $D_1$ and $D_2$, the equality 
\[
\Phi_{f}^{\rm shift}(D)=\Phi_{f}^{\rm shift}(D_1)+\Phi_{f}^{\rm shift}(D_2)
\]
holds.
\end{lemma}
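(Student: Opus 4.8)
The plan is to produce a single $n$-up-down coloring of $D$ by splicing together $n$-up-down colorings of $D_1$ and $D_2$, and then to invoke the fact, recalled just before Corollary~\ref{cor:cocycleinv}, that for a shiftable cocycle the weight sum $W_f(\cdot,\cdot)$ does not depend on the chosen up-down coloring. First I would fix notation for the connected sum: write $D=D_1\sharp D_2$ as the diagram obtained by choosing a point $p_i$ in the interior of a semi-arc $a_i$ of $D_i$ ($i=1,2$), cutting $D_1$ and $D_2$ at $p_1$ and $p_2$, and reconnecting the four free ends so that the orientations match. Splitting $a_i$ at $p_i$ into its initial portion $a_i'$ and its terminal portion $a_i''$, the set $\mathcal{SA}(D)$ is the disjoint union of $\mathcal{SA}(D_1)\setminus\{a_1\}$, $\mathcal{SA}(D_2)\setminus\{a_2\}$, and the two new semi-arcs $s=a_1'\cup a_2''$ and $t=a_2'\cup a_1''$; moreover every real-crossing of $D$ is a real-crossing of $D_1$ or of $D_2$, and the connected sum creates no real-crossing. (If $D_1$ or $D_2$ has no real-crossing the lemma is immediate, since then that factor contributes $0$ and $D$ agrees with the other diagram up to a crossing-free arc, so I would assume that both $D_1$ and $D_2$ have at least one real-crossing.)

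Next I would build the coloring. Fix arbitrary $C_i\in{\rm Col}_n(D_i)$ for $i=1,2$; by Corollary~\ref{cor:shift} we may replace $C_2$ by a shift of itself so that $C_1(a_1)=C_2(a_2)$, and call this common value $a$. Define $C:\mathcal{SA}(D)\to\mathbb{Z}_n$ by $C(e)=C_i(e)$ when $e$ is inherited from $D_i$, and $C(s)=C(t)=a$. Then $C$ is an $n$-up-down coloring of $D$: at a real-crossing of $D$ not incident to $s$ or $t$, condition~(\ref{condi1}) holds because it holds for $C_1$ or for $C_2$; at a real-crossing incident to $s$ or $t$, one of the semi-arcs that enters the defining condition there is $s$ or $t$, which carries the color $a$, and since $a=C_1(a_1)=C_2(a_2)$ is exactly the color carried by $a_1$ or $a_2$ in $D_1$ or $D_2$, condition~(\ref{condi1}) at that crossing is again inherited from $C_1$ or $C_2$.

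Finally I would compare the weight sums. Since the real-crossings of $D$ are exactly those of $D_1$ together with those of $D_2$, and the weight $w_c$ of a real-crossing $c$ lying in $D_i$ depends only on the $C$-colors of the semi-arcs around $c$, which coincide with the $C_i$-colors of the corresponding semi-arcs of $D_i$, summing over all real-crossings gives
\[
W_f(D,C)=W_f(D_1,C_1)+W_f(D_2,C_2).
\]
Because $f$ is shiftable, each of these three weight sums equals $\Phi_f^{\rm shift}$ of the corresponding diagram, independently of the coloring used, so $\Phi_f^{\rm shift}(D)=\Phi_f^{\rm shift}(D_1)+\Phi_f^{\rm shift}(D_2)$; as the choice of connected sum was arbitrary, this proves the lemma. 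The only point that needs care is the combinatorial bookkeeping of how $\mathcal{SA}(D)$ and the real-crossings of $D$ decompose under the splicing — in particular, that the two new semi-arcs $s$ and $t$ receive one and the same color, which is exactly what forces the normalization $C_1(a_1)=C_2(a_2)$ and is where Corollary~\ref{cor:shift}, hence shiftability, enters; the remaining verifications are routine.
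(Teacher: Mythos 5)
Your proof is correct and rests on the same idea as the paper's: the real-crossings of $D=D_1\sharp D_2$ are exactly those of $D_1$ and $D_2$, colorings of $D$ correspond to compatible pairs of colorings of $D_1$ and $D_2$, so the weight sums add, and shiftability makes the result independent of the colorings chosen. The paper merely runs the correspondence in the opposite direction (restricting a coloring of $D$ to the two factors rather than splicing two colorings together, which lets it skip the normalization via Corollary~\ref{cor:shift}), but this is the same argument with more bookkeeping on your side.
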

\begin{proof}
Let $n$ be an positive integer. 
Let $C$ be an $n$-up-down coloring of $D$.
Then $C$ is  separated into an $n$-up-down coloring $C_1$ of $D_1$ and an $n$-up-down coloring $C_2$ of $D_2$ such that 
$C_i$ $(i=1,2)$ satisfies that $C_i(e)=C(e')$
if $e \cap e' \not = \emptyset$ for $e \in  \mathcal{SA}(D_i)$ and $e' \in  \mathcal{SA}(D)$.
Hence by the definition of the sum of weights, we have  
\[
\Phi_{f}^{\rm shift}(D)= W_{f}(D,C) = W_{f}(D_1,C_1) + W_{f}(D_2,C_2)  =\Phi_{f}^{\rm shift}(D_1) + \Phi_{f}^{\rm shift}(D_2).
\]
\end{proof}

\begin{example}\label{exam:OD}
Let  $\Delta$ and $O$ denote the virtual-knot diagrams shown in Figure~\ref{virtual2}. Note that both of them represent the trivial virtual-knot and $O$ has some orientation.
\begin{figure}[h]
\begin{center}
\includegraphics[width=50mm]{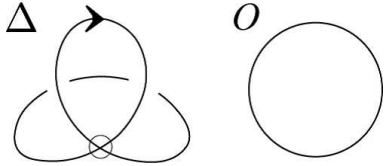}
\caption{Virtual knot diagrams $\Delta$ and $O$.}
\label{virtual2}
\end{center}
\end{figure}
Let $f:\mathbb Z_4 \times \mathbb Z_4 \times \{+,-\} \to \mathbb Z_4$ be the shiftable $4$-up-down cocycle in Example~\ref{exam:cocycle}.
We set the $4$-up-down coloring as shown in Figure~\ref{virtual2}.
Then the sum of the weights of the real-crossings of $\Delta$ is 
\[
f(1, 0 ,-)+f(1, 2,+) = 1+0 =1.
\]
Hence we have $\Phi_{f}^{\rm shift}(\Delta)=1$.
On the other hand, since $O$ has no real-crossing, $\Phi_{f}^{\rm shift}(O)=0$ holds.
Therefore by Corollary~\ref{cor:cocycleinv}, at least one RI\hspace{-1.5pt}I-move is needed  to transform $\Delta$ to $O$.  
\begin{figure}[h]
\begin{center}
\includegraphics[width=50mm]{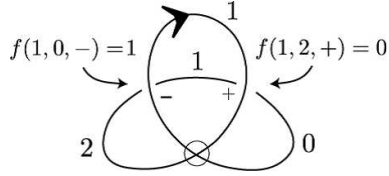}
\caption{$\Phi_{f}^{\rm shift}(\Delta) =1$.}
\label{delta}
\end{center}
\end{figure}
\end{example}

\phantom{x}
\noindent{\it Proof of Theorem~\ref{thm:4}}
Let $D'$ be a virtual-knot diagram which is obtained by taking the connected sum $\Delta \sharp D$ depicted in Figure~\ref{delta-conn}, where we may replace the diagram $\Delta$ with the one obtained after performing   a single RI-move as shown in the lower picture of  Figure~\ref{delta-conn}.
Then $D$ and $D'$ represent the same virtual-knot.
\begin{figure}[h]
\begin{center}
\includegraphics[width=100mm]{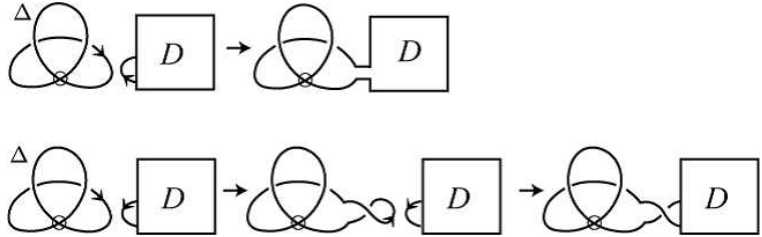}
\caption{A connected sum.}
\label{delta-conn}
\end{center}
\end{figure}

By Lemma~\ref{lemma:additive} and Example~\ref{exam:OD}, 
we have  
\[
\Phi_{f}^{\rm shift}(D')= \Phi_{f}^{\rm shift}(\Delta)+\Phi_{f}^{\rm shift}(D)=1 + \Phi_{f}^{\rm shift}(D) \not = \Phi_{f}^{\rm shift}(D) \mbox{ in $\mathbb Z_4$}.
\]
Therefore by Corollary~\ref{cor:cocycleinv}, at least one RI\hspace{-1.5pt}I-move is needed  to transform $D$ to $D'$.  
\hfill \qed

\phantom{x}

\begin{remark}
The unoriented version of Theorem~\ref{thm:4} also holds. 
To prove this, we choose a shiftable  $n$-up-down cocycle $g$ instead of the $4$-up-down cocycle $f$ in Example~\ref{exam:cocycle} such that for the virtual-knot diagram $\Delta$ in Figure~\ref{virtual2}, $\Phi_{g}^{\rm shift}(\Delta)\not = 0$ and  for any virtual-knot diagram $D$, $\Phi_{g}^{\rm shift}(D)$ does not depend on the orientation of $D$. Then we can see that for the  virtual-knot diagrams $D$ and $D'=\Delta\sharp D$ with any orientations,  $\Phi_{g}^{\rm shift}(D) \not = \Phi_{g}^{\rm shift}(D')$, which implies that as the unoriented virtual-knot diagrams $D$ and $D'$, at least one RI\hspace{-1.5pt}I-move is needed for transformations between them. We leave the detailed proof to the reader. For example, use the $4$-up-down cocycle $g: \mathbb Z_4 \times \mathbb Z_4 \times \{+, -\} \to \mathbb Z_4$ defined by 
\[
g(a,b, \varepsilon) = 
\left\{
\begin{array}{ll}
1 & (a=b\pm 1, \varepsilon =-),\\
0 & (\mbox{otherwise}).\\
\end{array}
\right.
\]
\end{remark}

\section{Questions and future studies}
\begin{itemize}
\item[1.] (To study about a generalization of up-down colorings.)
\end{itemize}
Let $n$ be a nonnegative  integer and  $P$ and $N$ integers.   

An \textit{$(n; P,N)$-up-down coloring} of a virtual-link diagram $D$ is a map $C: \mathcal{SA}(D) \to \mathbb Z_n$ satisfying the following conditions:
For a real-crossing $c$ of $D$, let $u_1$, $u_2$ (resp. $o_1$, $o_2$) be the under-semi-arcs (resp. over-semi-arcs) around $c$  such that the orientation of $D$ points from $u_1$ to $u_2$  (resp. from $o_1$ to $o_2$).  Then 
\begin{itemize}
\item if  $c$ is positive,  
\begin{align*}
C(u_2)=C(u_1)-P \mbox{\  and \ } C(o_2)=C(o_1)+P,
\end{align*}
and 
\item if  $c$ is negative,  
\begin{align*}
C(u_2)=C(u_1)-N \mbox{\  and \ } C(o_2)=C(o_1)+N.
\end{align*} 
\end{itemize}
It is easily seen that $(n;P,N)$-up-down colorings are a generalization of up-down colorings. 
Moreover the numbers of $(n;P,N)$-up-down colorings of virtual-link diagrams can be used to detect the necessity of RI\hspace{-1.5pt}I-moves.
In addition, the argument in Section~\ref{section:cocycleinvariants} is also extended in the cases using $(n; P,N)$-up-down colorings. 
We aim to address these topics including some applications in future work.
(Moreover note that an $(n; P,N)$-up-down coloring is a biquandle coloring if and only if $P=-N$.)

\begin{itemize}
\item[2.] (To find a (co)homology theory related to up-down cocycles.)
\end{itemize}
In \cite{CJKLS}, a quandle cohomology theory was introduced, see also \cite{FRS95, FRS96}.
The $2$- or $3$-cocycles (or $3$- or $4$-cocycles) are used to define a quandle cocycle invariant of an oriented classical-link (or an oriented surface-link).
The cocycle conditions are related to the RI-moves and  an  RI\hspace{-1.5pt}I\hspace{-1.5pt}I-move, see \cite{CJKLS}.

In Section~\ref{section:cocycleinvariants}, we defined an up-down cocycle.
The cocycle conditions are also related to the RI-moves and all the RI\hspace{-1.5pt}I\hspace{-1.5pt}I-moves. 
However we do not know  if there exists a (co)homology theory which are related to up-down cocycles. 
Hence, it might be natural to ask the following question: 
\begin{question}
Is there a cohomology theory whose $2$-cycles are related to the up-down cocycles?  
\end{question}

\begin{itemize}
\item[3.] (To find an application for classical-knots.)
\end{itemize}
In Section~\ref{section:cocycleinvariants}, by our method, we found two trivial virtual-knot diagrams such that at least one RI\hspace{-1.5pt}I-move is needed for transformations between them. For classical-knot diagrams, we give the following question:
\begin{question}
Is there a pair of trivial classical-knot diagrams such that at least one RI\hspace{-1.5pt}I-move are needed for transformations between them and
the necessity of RI\hspace{-1.5pt}I-moves is detected by our method (, but not detected by Hagge's or Manturov's one)?
\end{question}
\noindent If we find such a pair, we might be able to  give a new example satisfying the following theorem:
\begin{theorem}(cf. \cite{hagge})
For any classical-knot diagram $D$, there exists a diagram $D'$ representing the same classical-knot such that any sequence of Reidemeister moves between $D$ and $D'$ includes at least one RI\hspace{-1.5pt}I-move. 
\end{theorem}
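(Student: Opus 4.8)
I would deduce this from a $\mathbb Z/2\mathbb Z$-valued quantity attached to knot diagrams that is invariant under RI- and RI\hspace{-1pt}I\hspace{-1pt}I-moves but can be changed by an RI\hspace{-1pt}I-move. For a classical-knot diagram $D$ with crossings $c_1,\dots,c_n$, form the Gauss code, a cyclic word of length $2n$ in which each $c_i$ occurs twice; call two crossings \emph{interlaced} when their occurrences alternate around the cyclic word, and let $\ell(D)\in\mathbb Z/2\mathbb Z$ be the number of interlaced pairs reduced modulo $2$. This is well defined, since which pairs are interlaced depends only on the underlying double-occurrence word up to cyclic rotation and reversal. Granting that $\ell$ is unchanged by RI- and RI\hspace{-1pt}I\hspace{-1pt}I-moves (see below), the theorem follows quickly: for the given $D$, set $D'=D\sharp U$, where $U$ is the standard two-crossing diagram of the trivial knot, whose Gauss code is $p\,q\,p\,q$, so $\ell(U)=1$. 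Because the Gauss code of a connected sum is gotten by splicing one code into the other at a single point, no crossing of $D$ is interlaced with a crossing of $U$, so $\ell(D')=\ell(D)+\ell(U)=\ell(D)+1$ in $\mathbb Z/2\mathbb Z$. On the other hand $U$ is obtained from the crossingless diagram of the trivial knot by one RI\hspace{-1pt}I-move, so $D'$ represents the same classical-knot as $D$. Since $\ell(D')\not=\ell(D)$, no sequence of RI- and RI\hspace{-1pt}I\hspace{-1pt}I-moves can carry $D$ to $D'$, so every sequence of Reidemeister moves between them contains at least one RI\hspace{-1pt}I-move.

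It remains to establish the invariance of $\ell$. The RI-case is immediate: an RI-move inserts or deletes two \emph{adjacent} occurrences $c\,c$ of a crossing $c$; such a $c$ is interlaced with nothing, and inserting $c\,c$ at one place does not alter the cyclic order of the remaining occurrences, so $\ell$ is literally unchanged. For the RI\hspace{-1pt}I\hspace{-1pt}I-case I would argue as follows. In such a move one strand, say $\alpha_3$, is pushed across the crossing $a=\alpha_1\cap\alpha_2$, and the occurrences of the three crossings $a$, $b=\alpha_2\cap\alpha_3$, $c=\alpha_1\cap\alpha_3$ split into three length-two blocks $B_1,B_2,B_3$, where $B_i$ records the two crossings met while traversing $\alpha_i$. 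First I would check that the move reverses the internal order of exactly two of the three blocks and leaves the third unchanged. Next, using the alternation criterion, I would check that whether a given one of the three pairs within $\{a,b,c\}$ is interlaced depends only on the internal order of one specific block. Combining these, exactly two of the three interlacements among $\{a,b,c\}$ are toggled, so their contribution to $\ell$ changes by an even number; and since the move only permutes entries inside these three short blocks, every interlacement involving a crossing outside $\{a,b,c\}$, as well as every interlacement among outside crossings, is left alone. Hence $\ell$ is unchanged modulo $2$ — note that $\ell$ itself need not be preserved by an RI\hspace{-1pt}I\hspace{-1pt}I-move, which is why the reduction modulo $2$ is essential.

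The step I expect to be the real work is exactly that last bookkeeping: verifying, uniformly over the eight oriented RI\hspace{-1pt}I\hspace{-1pt}I-moves, that precisely two of the three block orders flip and that each interlacement among the three crossings is governed by a single block. I would also remark that this argument ignores virtual crossings completely, so it simultaneously reproves Theorem~\ref{thm:4}; by contrast, pushing the cocycle method of Theorem~\ref{thm:4} through in the classical case would require a diagram $\Delta$ of the trivial classical-knot with $\Phi_f^{\rm shift}(\Delta)\not= 0$ for some shiftable up-down cocycle $f$, which is the open problem raised at the end of the paper and which I do not know how to settle.
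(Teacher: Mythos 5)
The paper itself offers no proof of this statement; it is quoted from Hagge \cite{hagge}, whose argument is geometric, and it appears only as motivation for an open question. So your Gauss-code argument would be a genuinely new combinatorial proof if it worked. Unfortunately it breaks down at exactly the two places you flagged as ``the real work,'' and both failures are fatal.

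First, $\ell$ is \emph{not} invariant under RI\hspace{-1pt}I\hspace{-1pt}I-moves. In an RI\hspace{-1pt}I\hspace{-1pt}I-move the internal order of \emph{all three} blocks $B_1,B_2,B_3$ is reversed, not exactly two of them: realizing the three strands as the lines $y=0$, $y=x$, $y=-x+1$ and translating the first to $y=1$, one checks directly that each of the three lines meets its two crossings in the opposite order after the move, whatever the orientations. Your other observation is correct --- the interlacement of each pair in $\{a,b,c\}$ is governed precisely by the internal order of the unique block containing one occurrence of each --- but since each of the three blocks governs exactly one of the three pairs and all three blocks flip, all three interlacements are toggled, and $\ell$ changes by $3\equiv 1\pmod 2$ under \emph{every} RI\hspace{-1pt}I\hspace{-1pt}I-move. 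Concretely, the trefoil shadow $abcabc=[ab][ca][bc]$ has $\ell=3$, and one triangle move turns it into $[ba][ac][cb]=baaccb$, where every symbol is isolated and $\ell=0$. Second, your seed diagram $U$ does not exist classically: the code $pqpq$ has each chord interlaced with an odd number (namely one) of other chords, violating Gauss's evenness condition for planar realizability, so no classical-knot diagram has this Gauss code. A planar RI\hspace{-1pt}I-move applied to the round circle produces the code $pqqp$ or $ppqq$, both with $\ell(U)=0$, so in the classical category $\ell(D\sharp U)=\ell(D)$ and the construction detects nothing. (The code $pqpq$ is realizable only virtually --- this is essentially why the paper's proof of its Theorem~\ref{thm:4} needs the virtual crossing in $\Delta$, and why the classical case is posed as an open question.) To salvage an argument of this flavour you would need signed, over/under-decorated Gauss-diagram counts in the style of \"{O}stlund \cite{ostlund} or Hass--Nowik \cite{hass}, not the unsigned interlacement parity.
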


\begin{itemize}
\item[4.] (To study about the other generalized Reidemeister moves.)
\end{itemize}
\begin{question}
For each $\mathcal{M}\in \{ RI, RI\hspace{-1.5pt}I, RI\hspace{-1.5pt}I\hspace{-1.5pt}I, VRI, VRI\hspace{-1.5pt}I, VRI\hspace{-1.5pt}I\hspace{-1.5pt}I,  VRI\hspace{-1.5pt}V\}$, is there a pair of trivial virtual-knot diagrams such that at least one $\mathcal{M}$-move is needed for transformations  between them?
\end{question}
When we restrict the moves $\mathcal{M}$ to $\mathcal{M}\in \{ RI, RI\hspace{-1.5pt}I, RI\hspace{-1.5pt}I\hspace{-1.5pt}I, VRI\}$, the statement of the above question is true, see \cite{CG, hagge, manturov, ostlund,  trace}. 
Note that it is easy to construct a pair of trivial virtual-knot diagrams which need at least one VRI-move between them, since the parity of the number of virtual-crossings is unchanged under the generalized Reidemeister moves except for the VRI-move. 
If we find such a pair of trivial virtual-knot diagrams for the other  generalized Reidemeister moves, we might be able to answer the next conjecture:
\begin{conjecture}
For any virtual-knot $K$, there exist two diagrams $D$ and $D'$ of $K$ such that any sequence of generalized Reidemeister moves between $D$ and $D'$ must contain all of the generalized Reidemeister moves.
\end{conjecture}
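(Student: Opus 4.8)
The plan is to reduce the conjecture to a ``gadget'' statement. For each $\mathcal{M}\in \{ RI, RI\hspace{-1.5pt}I, RI\hspace{-1.5pt}I\hspace{-1.5pt}I, VRI, VRI\hspace{-1.5pt}I, VRI\hspace{-1.5pt}I\hspace{-1.5pt}I, VRI\hspace{-1.5pt}V\}$ I want a trivial virtual-knot diagram $G_{\mathcal{M}}$ and a quantity $\iota_{\mathcal{M}}(\cdot)$ defined on virtual-knot diagrams such that (i) $\iota_{\mathcal{M}}$ is unchanged by every generalized Reidemeister move \emph{except} $\mathcal{M}$, (ii) $\iota_{\mathcal{M}}$ is additive under connected sum, and (iii) $\iota_{\mathcal{M}}(G_{\mathcal{M}})\neq 0$ whereas $\iota_{\mathcal{M}}$ vanishes on a fixed crossingless diagram of the trivial virtual-knot. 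Granting this, for a virtual-knot $K$ and any diagram $D$ of $K$ I would set $D'=G_{RI}\sharp G_{RI\hspace{-1.5pt}I}\sharp\cdots\sharp G_{VRI\hspace{-1.5pt}V}\sharp D$. Each $G_{\mathcal{M}}$ is trivial, so $D'$ represents $K$; and if some sequence of moves from $D$ to $D'$ avoided all $\mathcal{M}$-moves, then (i) and (ii) force $\iota_{\mathcal{M}}(D)=\iota_{\mathcal{M}}(D')=\iota_{\mathcal{M}}(D)+\sum_{\mathcal{M}'}\iota_{\mathcal{M}}(G_{\mathcal{M}'})$, hence $\sum_{\mathcal{M}'}\iota_{\mathcal{M}}(G_{\mathcal{M}'})=0$; if moreover each $G_{\mathcal{M}'}$ is chosen so that it can be undone without $\mathcal{M}$-moves, then $\iota_{\mathcal{M}}(G_{\mathcal{M}'})=0$ for $\mathcal{M}'\neq\mathcal{M}$, which contradicts (iii). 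So every sequence from $D$ to $D'$ uses at least one move of each of the seven types.

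Four of the gadgets are essentially already available: $G_{RI}$ with the rotation-number obstruction of \cite{hagge, manturov}; $G_{RI\hspace{-1.5pt}I}=\Delta$ of Example~\ref{exam:OD} with $\iota_{RI\hspace{-1.5pt}I}=\Phi_{f}^{\rm shift}$, which is additive by Lemma~\ref{lemma:additive} and effective by Theorem~\ref{thm:3}; $G_{RI\hspace{-1.5pt}I\hspace{-1.5pt}I}$ with a triple-point obstruction in the spirit of \cite{ostlund, trace}; and $G_{VRI}$ with $\iota_{VRI}$ the parity of the number of virtual crossings, as already noted in the text. The genuinely new work is to produce $G_{VRI\hspace{-1.5pt}I}$, $G_{VRI\hspace{-1.5pt}I\hspace{-1.5pt}I}$ and $G_{VRI\hspace{-1.5pt}V}$ with suitable invariants. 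Since virtual crossings carry no over/under structure, the coloring and cocycle machinery of this paper cannot help; instead I would work with the Gauss word of $D$ refined so as to record virtual crossings (equivalently, with the underlying flat virtual-knot, or with the abstract-link-diagram structure), and search for quotients of this data that are preserved by six of the seven moves. For $VRI\hspace{-1.5pt}I$, which changes the number of virtual crossings by $2$, parity is useless, so one would want a signed or linking-type weighted count of virtual crossings altered only by $VRI\hspace{-1.5pt}I$; for $VRI\hspace{-1.5pt}I\hspace{-1.5pt}I$ and $VRI\hspace{-1.5pt}V$ I expect a flat/virtual analogue of {\O}stlund-type triple-point counting, among virtual and among mixed real/virtual configurations respectively.

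The main obstacle is precisely the construction of these virtual-crossing-sensitive invariants. Every quandle or biquandle coloring and cocycle invariant---including all those built in this paper---is by design invariant under all virtual moves, so none of them can ever detect $VRI\hspace{-1.5pt}I$, $VRI\hspace{-1.5pt}I\hspace{-1.5pt}I$ or $VRI\hspace{-1.5pt}V$; one genuinely needs new Gauss-diagrammatic bookkeeping, and the delicate point is to keep it stable under six move types while sensitive to the seventh. A secondary, more routine difficulty is the independence bookkeeping in the reduction above: one must choose the seven gadgets so that each $\iota_{\mathcal{M}}$ vanishes on the other six (or otherwise ensure the seven contributions cannot cancel), which for $\Phi_{f}^{\rm shift}$ is immediate from Lemma~\ref{lemma:additive} but must be re-checked for every new invariant. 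With all seven gadgets and their invariants in hand, the connected-sum argument above yields the conjecture.
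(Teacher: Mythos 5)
The statement you are addressing is stated in the paper as an open conjecture: the authors give no proof of it, and explicitly say only that finding, for each generalized Reidemeister move, a pair of trivial virtual-knot diagrams requiring that move ``might'' let one answer it. Your proposal follows exactly that suggested route (gadgets plus connected sum plus move-insensitive invariants), but it is not a proof: the entire content of the conjecture beyond what is already known lies in constructing the gadgets and invariants for the virtual moves of type I\hspace{-1.5pt}I, I\hspace{-1.5pt}I\hspace{-1.5pt}I and I\hspace{-0.5pt}V, and you explicitly leave these unconstructed (``the genuinely new work is to produce\dots''). Deferring precisely the open part of an open problem to ``new Gauss-diagrammatic bookkeeping'' to be found later is a genuine gap, not a secondary difficulty; as you yourself note, none of the coloring or cocycle machinery of this paper can touch the virtual moves, so no existing result closes it.

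There is also a flaw in the reduction itself that is specific to the virtual category. You write ``Each $G_{\mathcal{M}}$ is trivial, so $D'$ represents $K$.'' For virtual knots this inference is false in general: the connected sum of two diagrams of the trivial virtual-knot can represent a nontrivial virtual-knot (the Kishino knot is the standard example), because connected sum of virtual knots is not well defined on equivalence classes. The paper's own proof of Theorem~\ref{thm:4} avoids this by exhibiting an explicit sequence of moves, supported in the disk containing $\Delta$, that undoes the summand $\Delta$ inside $\Delta\sharp D$ (this is what Figure~\ref{delta-conn} is for). In your scheme you must therefore require from the outset that each gadget $G_{\mathcal{M}}$ can be undone \emph{in situ} after the connected sum --- you invoke such a condition only later, and only to make $\iota_{\mathcal{M}}(G_{\mathcal{M}'})=0$ for $\mathcal{M}'\neq\mathcal{M}$ --- and this requirement interacts nontrivially with the demand that $G_{\mathcal{M}}$ force an $\mathcal{M}$-move. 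Finally, additivity under connected sum, which you take as hypothesis (ii), is established in the paper only for $\Phi_f^{\rm shift}$ (Lemma~\ref{lemma:additive}) and is immediate for writhe and for the parity of virtual crossings, but would have to be proved anew for each of the hypothetical invariants detecting the remaining moves. As it stands, your text is a reasonable research plan consistent with the authors' own outlook, but it does not establish the conjecture.
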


\begin{figure}[t]
\begin{center}
\includegraphics[width=120mm]{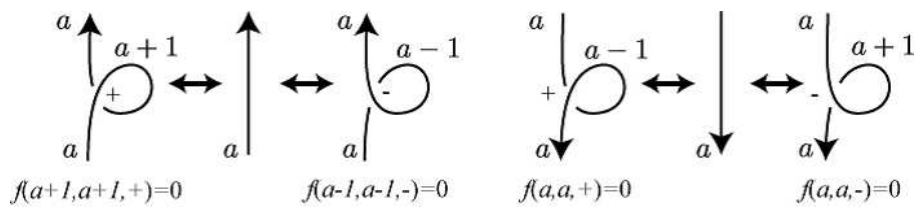}
\caption{The RI-moves.}
\label{r1r3}
\end{center}
\end{figure}
\begin{figure}[t]
\begin{center}
\includegraphics[width=120mm]{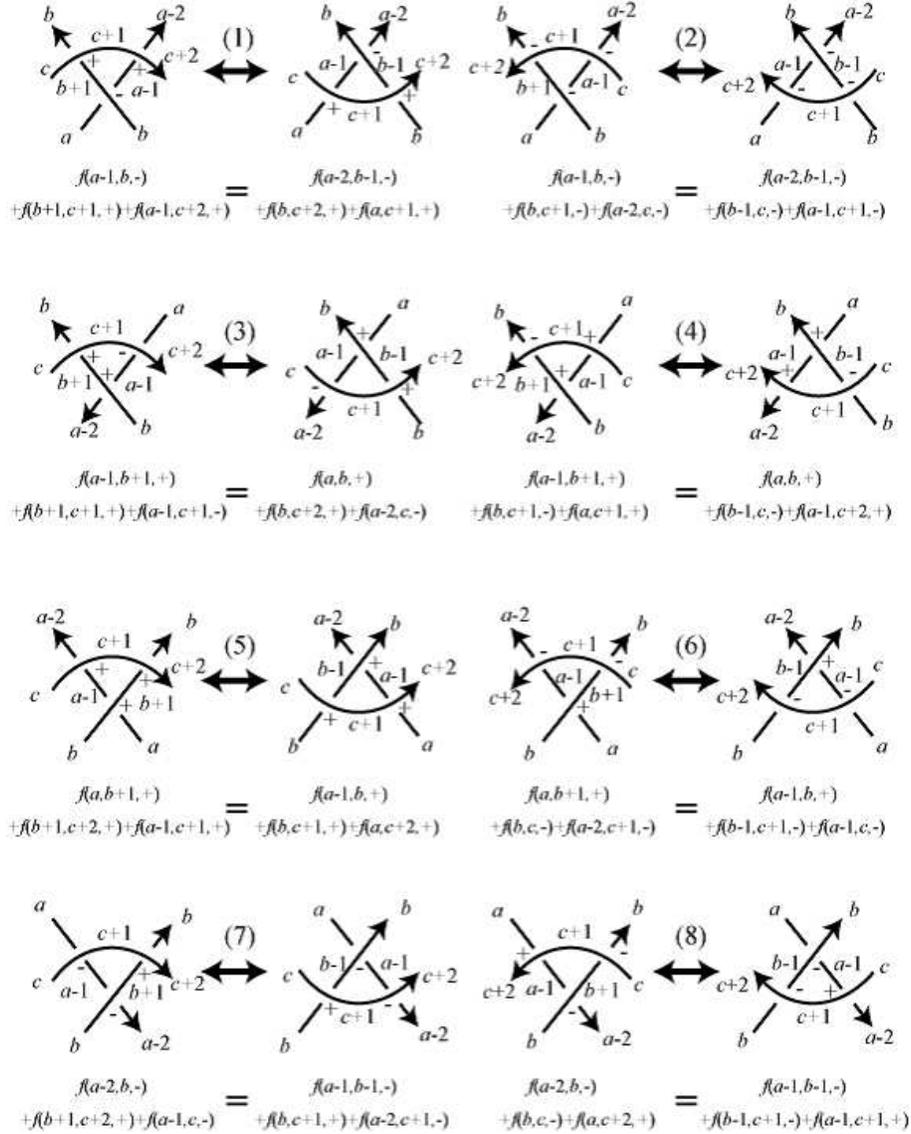}
\caption{The RI\hspace{-1pt}I\hspace{-1pt}I-moves.}
\label{r3}
\end{center}
\end{figure}
\begin{figure}[t]
\begin{center}
\includegraphics[width=50mm]{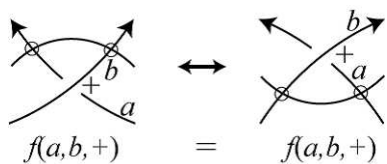}
\caption{VRI\hspace{-1pt}V-moves}
\label{vr4}
\end{center}
\end{figure}



\end{document}